\documentclass[12pt]{amsart} 
\usepackage{amssymb}
\usepackage{latexsym}
\usepackage{amsmath}
\usepackage{amsthm}
\usepackage{amsfonts}

\newtheorem{theorem}{Theorem}[section]
\newtheorem{lemma}[theorem]{Lemma}
\newtheorem{corollary}[theorem]{Corollary}
\newtheorem{proposition}[theorem]{Proposition}
\theoremstyle{definition}

\newtheorem{remark}[theorem]{Remark}

\numberwithin{equation}{section}
\numberwithin{theorem}{section}
\allowdisplaybreaks

\DeclareFontFamily{U}{mathx}{}
\DeclareFontShape{U}{mathx}{m}{n}{<-> mathx10}{}
\DeclareSymbolFont{mathx}{U}{mathx}{m}{n}
\DeclareMathAccent{\widecheck}{0}{mathx}{"71}

\newcommand{\R}{\mathbb R}
\newcommand{\N}{\mathbb N}
\newcommand{\C}{\mathbb C}
\newcommand{\T}{\mathbb T}

\newcommand{\lexpa}{L^{\alpha}_{\textnormal{exp}}}
\newcommand{\tinf}{T_{\infty}}
\newcommand{\linf}{L^\infty}
\newcommand{\logl}{L\textnormal{log} L}
\newcommand{\lexp}{L_{\textnormal{exp}}}
\newcommand{\logla}{L(\textnormal{log } L)^{\alpha}}
\newcommand{\loglb}{L(\textnormal{log } L)^{\beta}}
\newcommand{\tlog}{T_{\textnormal{log}}}
\newcommand{\simb}{\text{sim }\mathcal{B}}
\newcommand{\mcB}{\mathcal B}

\title[The  finite   Hilbert transform on $\linf$]
{The finite Hilbert transform acting   on $\linf$}

\author[G. P.  Curbera]{Guillermo P. Curbera}
\address{Facultad de Matem\'aticas \& IMUS,
Universidad de Sevilla, 
Calle Tarfia s/n,  Sevilla 41012, Spain}
\email{curbera@us.es}

\author[S. Okada]{Susumu Okada}
\address{112 Marcorni Crescent, Kambah, ACT 2902, Australia}
\email{sus.okada@outlook.com}

\author[W.J. Ricker]{Werner J. Ricker}
\address{Math.--Geogr.\  Fakult\"at, Katholische Universit\"at
Eichst\"att--Ingolstadt, D--85072 Eichst\"att, Germany}
\email{werner.ricker@ku.de}

\makeatletter
\@namedef{subjclassname@2020}{%
  \textup{2020} Mathematics Subject Classification}
\makeatother

\subjclass[2020]{Primary 44A15, 46E30; Secondary  47A53, 47B34.}
\keywords{Finite Hilbert transform, airfoil equation, $L^\infty$, Zygmund space $\lexp$.}
\begin{document}

\begin{abstract} 
The action of the finite Hilbert transform  defined 
on $L^\infty(-1,1)$ and taking its values 
in the Zygmund space $\lexp(-1,1)$ is studied in detail. 
This is a reciprocal situation to the investigation recently 
undertaken in \cite{COR-asnsp} 
of the finite Hilbert transform defined on the Zygumd space $\logl(-1,1)$
and taking its values in $L^1(-1,1)$.
The fact that both $L^\infty(-1,1)$ and  $\lexp(-1,1)$ fail to  be separable
generates new features not present in \cite{COR-asnsp}.
\end{abstract}

\maketitle


\section{Introduction}
\label{S1}


The finite Hilbert transform $T(f)$ of $f\in L^1:=L^1(-1,1)$ is the  principal value integral
\begin{equation*}
T(f)(t):=\lim_{\varepsilon\to0^+} \frac{1}{\pi}
\left(\int_{-1}^{t-\varepsilon}+\int_{t+\varepsilon}^1\right) \frac{f(x)}{x-t}\,dx ,
\end{equation*}
which exists for a.e.\ $t\in(-1,1)$ and is a measurable function.
Its study was motivated by problems in Aerodynamics (see
\cite{cheng-rott},  \cite{reissner}, \cite{sohngen}, \cite{tricomi-1}, \cite{tricomi}) and 
in Elasticity Theory
(via the study of one-dimensional singular integral operators; 
see  \cite{duduchava}, \cite{gakhov}, \cite{gohberg-krupnik-1},
\cite{gohberg-krupnik-2},  \cite{mikhlin-prossdorf}, \cite{muskhelishvili}).
These applications arise via the  solution of the \textit{airfoil  equation} 
\begin{equation}\label{airfoil}
\frac{1}{\pi}\, \mathrm{ p.v.}  \int_{-1}^{1}\frac{f(x)}{x-t}\,dx=g(t),
\quad \mathrm{a.e. }\; t\in(-1,1),
\end{equation}
where $g$ is given and $f$ is to be found.

Due to M. Riesz's theorem, the finite Hilbert transform  operator 
maps $L^p:=L^p(-1,1) $ continuously into itself whenever $1 < p < \infty$.
By the 1990s the $L^p$ theory for the finite Hilbert transform (FHT, in short) was
well established; see \cite[Ch.11]{king}, \cite{OE} and \cite[\S4.3]{tricomi}.
In 1991, a celebrated paper  of Gel'fand and Graev, \cite{GG}, renewed an interest 
in the FHT by exposing its role in the inversion of the Radon transform and hence, its 
applications  to image reconstruction in Tomography;
see, for example, \cite{APS}, \cite{bertola-etal}, \cite{katsevich-tovbis}, \cite{sidky-etal}.

The $L^p$ theory for the FHT on $(-1,1)$ is rather different than for the  Hilbert transform on $\R$ 
or its periodic version on $\T$. Namely, Tricomi showed that $T\colon L^p\to L^p$
is a Fredholm operator for $1<p<2$ (with full range and one-dimensional kernel) and for
$2<p<\infty$ (injective and with one-codimensional kernel); 
see \cite[Example 4.21]{okada-ricker-sanchez}. 
A deeper understanding of the 
FHT is obtained when considering its action on rearrangement invariant (r.i., in short) 
spaces on $(-1,1)$; see \cite{COR-ampa},  \cite{COR-mh},  \cite{COR-am}, 
\cite{COR-mathnach}. For example, 
for the FHT operator 
$T_X\colon X\to X$  acting on a r.i.\ space $X$ with non-trivial Boyd indices, 
the following alternative  for its fine spectrum  holds:
 \begin{itemize}
\item[(a)] The point spectrum: 
$\sigma_{\mathrm{pt}}(T_X)\not=\emptyset
 \iff L^{2,\infty}\subseteq X.$
\item[(b)] The residual spectrum: 
$\sigma_{\mathrm{r}}(T_X)\not=\emptyset
 \iff X\subseteq L^{2,1}.$
 \item[(c)] The continuous spectrum: 
$\sigma_{\mathrm{c}}(T_X)=\sigma(T_X)
 \iff L^{2,\infty}\not\subseteq X \text{ and } X\not\subseteq L^{2,1},$
\end{itemize}
where $L^{2,1}$ and $L^{2,\infty}$ are the usual Lorentz 
spaces; \cite[Proposition 5.1]{COR-am}.

Note that while the FHT operator 
maps $L^p$ continuously into itself for $1 < p < \infty$ it 
does not map $\linf$ into itself neither $L^1$ into itself. In the latter case, 
$L^1$ is mapped continuously into $L^0(-1,1)$, due to   Kolmogorov's Theorem,
\cite[Theorem III.4.9(b)]{BS}. In \cite{COR-asnsp} we studied the situation when
$T\colon\logl\to L^1$, where $\logl$ is the  classical Zygmund space, 
which is a r.i.\ space  close to $L^1$ in the sense that it contains all $L^p$ spaces, for $p>1$.

The aim of this paper is to study the action of the FHT on $\linf$. In this case,
the operator $T\colon\linf\to\lexp$, denoted by $\tinf$, 
maps $\linf$ continuously into $\lexp$, where  the  classical Zygmund space 
$\lexp$ is a r.i.\ space  close to $\linf$ in the 
sense that it is contained in all $L^p$ spaces, for $p<\infty$. 
With the objective of solving the airfoil equation \eqref{airfoil}, 
we also need to investigate the auxiliary operator $\widecheck T$, defined by
\begin{equation*}
\widecheck{T}(f) :=-wT\Big(\frac{f}{w}\Big),
\end{equation*}
with $w(x):=\sqrt{1-x^2}$, for $x\in(-1,1)$, and to
show that  $\widecheck T\colon L^\infty\to\lexp$ boundedly. 
This, together with the characterization of the range $T(L^\infty)\subsetneqq\lexp$
of $\tinf$,  allows us to identify the inverse operator
$T^{-1}\colon T(L^\infty)\to L^\infty$ and hence, to solve the airfoil equation.
We end the paper by showing that
$\tinf$ is optimally defined, that is, $T\colon L^\infty\to\lexp$ does not
admit a continuous, linear, $\lexp$-valued extension to any 
strictly larger r.i.\ domain space within $L^1$.


\section{Preliminaries}
\label{S2}


The measure space considered in this paper is 
 the Lebesgue measure $\mu$ on $\R$ restricted to the Borel $\sigma$-algebra $\mcB$ 
of the open interval $(-1,1)$,
which we again denote by $\mu$. The vector space of all $\C$-valued, $\mcB$-simple functions 
is denoted by $\simb$.   Denote by $L^0:=L^0(\mu)$ the vector space of all 
$\C$-valued measurable functions on $(-1,1)$.  Measurable functions 
on $(-1,1)$ which  
coincide a.e.\  are identified.  With respect to the a.e.\  pointwise order
for its positive cone, $L^0$ 
is a complex vector lattice.  The space $L^0$ is also a metrizable topological
vector space for the topology of convergence in measure.  
An order ideal  $X$ of $L^0$ is called a \textit{Banach 
function space} (briefly, B.f.s.) over the measure space $((-1,1), \mcB, \mu)$ 
if $\simb\subseteq X$ (equivalently, $\linf \subseteq X$)
and if $X$ is equipped with a lattice norm $\|\cdot\|_X$ for which 
it is complete.  For brevity, we speak of  $X$ as a B.f.s.\ over $(-1,1)$.  
Since $\mu$ is a finite measure, necessarily $X\subseteq L^1$ 
(cf. Definitions 1.1 and 1.3 in \cite[Section II.1]{BS})
with a continuous inclusion
(as it is a positive linear map between Banach lattices).
A typical example of a B.f.s.\ over $ (-1,1)$ is the Lebesgue space $L^p:= L^p(\mu)$ for 
each $ 1 \le p \le \infty$.

A function  $f$ in a B.f.s\ $X$ over $(-1,1)$ has 
\textit{absolutely continuous norm} if $\|f\chi_{A_n}\| \downarrow 0$ 
whenever $A_n \downarrow\emptyset$ in $\mcB$.
The subspace 
$X_a$ of $X$ consisting of all functions which have
absolutely continuous norm is an order ideal
in $X$. That is, $X_a$ is a closed linear subspace with the property: 
$f\in X_a$ and $|g|\le|f|$ $\mu$-a.e.\
implies that $g\in X_a$. A B.f.s.\ $X$ 
has \textit{$\sigma$-order continuous} (or 
\textit{absolutely continuous}) norm if $X=X_a$, that is, 
for $f\in X$ given, $\|f_n\|_X\downarrow 0$ for every sequence 
$(f_n)_{n=1}^\infty\subseteq L^0$ satisfying $|f|\ge|f_n|\downarrow 0$ $\mu$-a.e.
For these facts and notions, see  \cite[Section I.3]{BS}.
If $\linf\subseteq X_a$, then $X_a$ is also a B.f.s.\ over $(-1,1)$. It is known that
$(\linf)_a=\{0\}$. A B.f.s.\  $X$ has the \textit{Fatou property} if, whenever $(f_n)_{n=1}^\infty$ 
is a norm-bounded, increasing sequence of non-negative functions in $X$ for which
$f=\sup_{n\in\N} f_n $ exists in $L^0$, then necessarily  $f \in X$ 
and $\|f_n\|_X\uparrow \|f\|_X$.
The \textit{associate space} $X'$ of $X$  is  the vector sublattice of $L^0$ 
defined by
$X':= \{g\in L^0: fg \in L^1 \text{  for every  } f\in X\}$
equipped with the lattice norm $\|g\|_{X'} : = \sup\{|\int_{-1}^1 fg\,d\mu|: 
f \in X \text{ with   } \|f\|_X\le 1\}$.  It turns out that $X'$ is a closed linear subspace of the dual 
Banach space $X^*$ of $X$.  Moreover, $X'$ is also a B.f.s.\ over $(-1,1)$
which always has the Fatou property.  
If $X$ has  $\sigma$-order  continuous norm, then $X'= X^*$; 
see, for example,  \cite[p.29]{LT}.

The decreasing rearrangement $f^*\colon [0,2]\to [0,\infty]$  
 of a function $f \in L^0$  is the
right continuous inverse of its distribution function:
$ \lambda\mapsto \mu\big(\{t\in (-1,1):\,|f(t)|>\lambda\}\big)$ for $\lambda\ge 0$.
A \textit{rearrangement invariant}  space $X$ over $(-1,1)$ is a
B.f.s.\ with the Fatou property
such that, if $g^*\le f^*$ with $f\in X$,
then $g\in X$ and $\|g\|_X\le\|f\|_X$. In this case, its associate space $X'$ 
is also a r.i.\  space, \cite[Proposition II.4.2]{BS}.


For $\alpha\ge0$, the Zygmund space $\lexpa:=\lexpa(-1,1)$ 
consists of all measurable functions $f$
on $(-1,1)$  for which there exists a constant
$\lambda=\lambda(f)>0$ such that
$$
\int_{-1}^1\exp\big(\lambda|f(x)|\big)^{1/\alpha}\,dx<\infty,
$$
\cite[Definition IV.6.11]{BS}. 
For $\alpha=0$ the space $\lexp^0$ is interpreted to be $\linf$. For each
$\alpha\ge0$ the space $\lexpa$ is a r.i.\ space with respect to the  norm 
$$
\|f\|_{\lexpa}:=\sup_{0<t<2}\frac{\int_0^tf^*(s)\,ds}{t(\log\big(\frac{2e}{t}\big))^\alpha},
\quad f\in\lexpa,
$$
\cite[Lemma IV.6.12]{BS}. 
An equivalent norm to the above norm in $\lexpa$ is given by
$$
\sup_{0<t<2}\frac{f^*(t)}{\big(\log\big(\frac{2e}{t}\big)\big)^\alpha},
\quad f\in\lexpa;
$$
see \cite[Theorem 5.3 in \S II.5.2, p.115]{KPS}. The spaces $\lexpa$ are non-separable 
and hence, their norm is not $\sigma$-order  continuous. They are
close to $\linf$ in the sense that
$\linf\subseteq \lexpa\subseteq L^p$ for all 
$\alpha\ge0$ and all $1\le p<\infty$;  for the case $\alpha=1$,
see  \cite[Theorem IV.6.5]{BS}. 
Observe that $\lexp^\alpha\subseteq \lexp^\beta$ for all $0\le\alpha\le\beta$.

For $\alpha=1$,  the Zygmund  space $\lexp:=L_{\text{exp}}^1$
consists of all functions having exponential integrability, \cite[Definition IV.6.1 and Lemma IV.6.2]{BS},
equivalently, functions $f$ on $(-1,1)$ for which there exists a constant $c=c(f)>0$ such that
$$
f^*(t)\le c\log\Big(\frac{2e}{t}\Big),\quad 0<t<2.
$$
The r.i.\ space $\lexp$ is a non-separable Marcinkiewicz space and the closure of $\simb$
 in $\lexp$, denoted by $(\lexp)_b$, is the B.f.s.\ 
consisting of all measurable functions $f$ on $(-1,1)$ satisfying
\begin{equation}\label{eq-2.1}
\lim_{t\to0^+}\frac{\int_0^tf^*(s)\,ds}{t\log\big(\frac{2e}{t}\big)}=0,
\end{equation}
\cite[Lemma 5.4 in \S II.5.2, p.116]{KPS}; it coincides with the separable space 
$(\lexp)_a$ and does 
not have the Fatou property.
The r.i.\ space $\lexp$ is also an Orlicz space, \cite[Section IV.8]{BS}. Define
a Young function $\Phi$ on $[0,\infty)$ by $\Phi(t):=e^t-t-1$, which satisfies
$$
\int_{-1}^1\Phi(|f|)\,d\mu\le \int_{-1}^1 e^{|f|}\,d\mu\le
3+2\int_{-1}^1\Phi(|f|)\,d\mu,\quad f\in L^0.
$$
The previous inequality implies that the corresponding Orlicz space 
coincides with $\lexp$ (with equivalent norms) as $\Phi$ is equivalent to the Young
function given in Example IV.8.3(d) in \cite{BS}.
Accordingly, 
\begin{equation}\label{new2.2}
(\lexp)_a=(\lexp)_b=\Big\{f\in\lexp: \int_{-1}^1 e^{\lambda|f|}\,d\mu<\infty
\text{ for \textit{every} } \lambda\ge0\Big\};
\end{equation}
see \cite[p.120]{LT} (and \cite[Ch. 2, Sect. 3, Theorem 1]{Lu} for a proof).
It follows that $L^\infty\subsetneqq (\lexp)_a\subsetneqq \lexp$. Indeed, direct 
calculations via \eqref{new2.2} show that 
$f(x):=|\log (x)|\chi_{[0,1)}(x)$ and $g(x):=\chi_{[0,1)}(x)\log(1+|\log(x)|)$,
for $x\in(-1,1)$, belong to        
$\lexp \setminus (\lexp)_a$  and    
$(\lexp)_a\setminus L^\infty$, respectively.

For $\alpha\ge0$,  the Zygmund space $\logla$  
consists of all measurable functions $f$
on $(-1,1)$ for which either one of the following two equivalent conditions hold:
$$
\int_{-1}^1|f(x)|(\log(2+|f(x)|)^\alpha\,dx<\infty,\quad \int_0^2 f^*(t)\Big(\log
\Big(\frac{2e}{t}\Big)\Big)^\alpha\,dt<\infty,
$$
\cite[Definition IV.6.11, Lemma IV.6.12]{BS}. 
The space $\logla$ is  r.i.\  with $\sigma$-order continuous norm  
given by
$$
\|f\|_{\logla}:=\int_0^2 f^*(t)\Big(\log\Big(\frac{2e}{t}\Big)\Big)^\alpha\,dt,\quad f\in\logla.
$$
For each $\alpha\ge0$,  the associate space of $\logla$ is 
$\lexpa$, \cite[Theorem 5.2 in \S II.5.2, p.112]{KPS}.
Since $\logla$ has $\sigma$-order continuous norm, 
$(\logla)^*=(\logla)'=\lexpa$. The spaces $\logla$ are
close to $L^1$ in the sense that
$L^p\subseteq\logla\subseteq L^1$ for all $\alpha\ge0$ and all $1<p<\infty$; 
for the case $\alpha=1$, see \cite[Theorem IV.6.5]{BS}. 
Moreover, $\loglb\subseteq \logla$
for all $0\le\alpha\le\beta$.

For $\alpha=1$  the Zygmund  space  $L(\textnormal{log} L)^1$ is, of course, $\logl$.
The case when $\alpha=0$ is interpreted as $L(\textnormal{log} L)^0=L^1$.


The \textit{lower} and \textit{upper Boyd indices} of  a r.i.\  space
$X$ are two indices, $\underline{\alpha}_X$ and  $\overline{\alpha}_X$, which 
measure the effect of dilations in $X$ and satisfy 
$0\le\underline{\alpha}_X\le \overline{\alpha}_X\le1$,
\cite[Definition III.5.12]{BS}. 
The boundedness of the finite Hilbert  transform
$T\colon X\to X$ is equivalent to $X$ having non-trivial  Boyd indices, that is, 
$0<\underline{\alpha}_X\le \overline{\alpha}_X<1$; 
see  \cite[\S II.8.6, pp.170--171]{KPS}.


For additional details concerning the Zygmund spaces $\logla$
and $\lexpa$ we refer to \cite[Ch.III]{BR}, for example.


\section{The finite Hilbert transform on $\linf$}
\label{S3}


In this section we establish some basic properties of the finite Hilbert transform
when it acts on $L^\infty$ and takes its values in $\lexp$.

The following result, a combination of Theorem 3.1 and
Corollary 3.3 in  \cite{COR-asnsp}, will be repeatedly used in the sequel.

\begin{proposition}\label{p-3.1}
Let  $f\in L^1$ and $g\in \logl$ satisfy   $fT(g\chi_A)\in L^1$, 
for every  set $A\in\mathcal B$.
Then  $gT(f)\in L^1$ and the following Parseval formula is valid:
\begin{equation}\label{eq-3.1}
\int_{-1}^{1}fT(g)=-\int_{-1}^{1}gT(f).
\end{equation}

In particular, if $f\in L^\infty$ and $g\in\logl$, then  the above 
assumptions are satisfied and hence, \eqref{eq-3.1} is valid.
\end{proposition}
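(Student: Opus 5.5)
Proposition~\ref{p-3.1} is a combination of Theorem 3.1 and Corollary 3.3 in \cite{COR-asnsp}, so the main part is quoted from there. What I would verify here is the ``in particular'' clause: for $f\in\linf$ and $g\in\logl$, one has $fT(g\chi_A)\in L^1$ for every $A\in\mcB$.

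For the main part, I would recall the mechanism rather than re-prove it. The Parseval formula \eqref{eq-3.1} is classical for $f,g\in L^2$, by the $L^2$-boundedness of $T$ and Fubini-type antisymmetry of the kernel $1/(x-t)$. To pass to $f\in L^1$ and $g\in\logl$ one approximates $g$ by simple functions $g_n=\sum c_k\chi_{A_k}$ (dense in $\logl$, since $\logl$ has $\sigma$-order continuous norm). The hypothesis $fT(g\chi_A)\in L^1$ for all $A$ makes the map $A\mapsto\int fT(g\chi_A)$ a countably additive measure (via Kolmogorov/weak-type arguments), and this allows limits to be exchanged. I would simply invoke \cite[Theorem 3.1, Corollary 3.3]{COR-asnsp} for this. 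The most delicate point there is the countable additivity and integrability of $gT(f)$, which is exactly what \cite{COR-asnsp} handles.

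For the particular case, the key step is the classical Zygmund estimate that $T$ maps $\logl$ continuously into $L^1$. This is the very setting of \cite{COR-asnsp} recalled in the Introduction. Since $\logl$ is an order ideal, $g\chi_A\in\logl$ with $\|g\chi_A\|_{\logl}\le\|g\|_{\logl}$, and therefore $T(g\chi_A)\in L^1$. Then
\[
\int_{-1}^1|fT(g\chi_A)|\le\|f\|_\infty\,\|T(g\chi_A)\|_{L^1}<\infty .
\]
Also $f\in\linf\subseteq L^1$, so all hypotheses hold and \eqref{eq-3.1} follows. Alternatively, one could use the duality $(\logl)'=\lexp$ together with $T(\linf)\subseteq\lexp$, but that would be circular at this stage. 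So I rely on $T\colon\logl\to L^1$, which is where the only genuine analytic input lies.
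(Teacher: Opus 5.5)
Your proposal matches the paper, which gives no separate argument and simply quotes Theorem 3.1 and Corollary 3.3 of \cite{COR-asnsp}. Your check of the particular case is correct: $T\colon\logl\to L^1$ is bounded and $g\chi_A\in\logl$, so $|fT(g\chi_A)|\le\|f\|_\infty|T(g\chi_A)|$ is integrable.
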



\begin{proposition}\label{p-3.2}
For $\alpha\ge0$, the finite Hilbert transform 
$T\colon \lexpa\to L^{\alpha+1}_{\textnormal{exp}}$ is a bounded operator.
In particular, $T\colon L^\infty\to \lexp$ is a bounded operator.
\end{proposition}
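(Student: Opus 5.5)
We argue by duality, using that $\logla$ has $\sigma$-order continuous norm with $(\logla)^*=(\logla)'=\lexpa$, together with the Parseval formula of Proposition~\ref{p-3.1}. The key input is the known ``dual'' result for the pair of $L\log L$ spaces: the finite Hilbert transform maps $L(\textnormal{log }L)^{\alpha+1}$ boundedly into $\logla$ for every $\alpha\ge0$. This is the classical Zygmund-type estimate, and the case $\alpha=0$ is the statement $T\colon\logl\to L^1$ underlying \cite{COR-asnsp}. If needed, it can be derived from the Stein--Weiss/Calder\'on-type rearrangement estimate
\[
(Tf)^*(t)\le C\Big(\frac1t\int_0^t f^*(s)\,ds+\int_t^2 f^*(s)\frac{ds}{s}\Big),\qquad 0<t<2.
\]
One then applies Hardy's inequality in the weighted $L^1$ norm $\int_0^2(\cdot)\,(\log\frac{2e}{t})^\alpha\,dt$. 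For the first (Hardy) term, Fubini gives the weight $\int_s^2 (\log\frac{2e}{t})^\alpha\frac{dt}{t}\lesssim(\log\frac{2e}{s})^{\alpha+1}$. For the second term, Fubini gives $\frac1s\int_0^s(\log\frac{2e}{t})^\alpha dt\lesssim(\log\frac{2e}{s})^\alpha$. Both weights are bounded by $(\log\frac{2e}{s})^{\alpha+1}$, which gives the claim.

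Alternatively, and more directly, one can apply the same rearrangement estimate to $f\in\lexpa$ using the equivalent norm $\sup_t f^*(t)/(\log\frac{2e}{t})^\alpha$. Substituting $f^*(s)\le\|f\|(\log\frac{2e}{s})^\alpha$ gives
\[
\frac1t\int_0^t\Big(\log\frac{2e}{s}\Big)^\alpha ds\lesssim\Big(\log\frac{2e}{t}\Big)^\alpha
\quad\text{and}\quad
\int_t^2\Big(\log\frac{2e}{s}\Big)^\alpha\frac{ds}{s}\lesssim\Big(\log\frac{2e}{t}\Big)^{\alpha+1}.
\]
Hence $(Tf)^*(t)\lesssim\|f\|_{\lexpa}(\log\frac{2e}{t})^{\alpha+1}$, that is, $\|Tf\|_{L^{\alpha+1}_{\textnormal{exp}}}\lesssim\|f\|_{\lexpa}$. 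This route avoids duality altogether, and I would present it as the main proof.

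The main obstacle is justifying the pointwise rearrangement inequality for the finite Hilbert transform on $(-1,1)$. My plan is to extend $f$ by zero to $\R$ and note that $T(f)$ is the restriction of the Hilbert transform on $\R$. I then use the standard estimate $(Hf)^*(t)\lesssim S(f^*)(t)$, where $S$ is the Calder\'on operator; this follows from weak-type $(1,1)$ and $(p,p)$ bounds via the K-functional, as in \cite[Ch.\ III]{BS}. Restricting to $(-1,1)$ does not increase the rearrangement, and $f^*$ vanishes on $(2,\infty)$, so the integrals truncate at $2$.

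The constants in the two elementary logarithmic integrals are handled by integration by parts, which gives $\int_0^t(\log\frac{2e}{s})^\alpha ds\le C_\alpha t(\log\frac{2e}{t})^\alpha$. The case $\alpha=0$ yields $T\colon L^\infty\to\lexp$.
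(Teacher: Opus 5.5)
Your main argument is correct, but it takes a different route from the paper. The paper's proof is only the duality sketch in your first paragraph:

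\begin{itemize}
\item it quotes $T\colon L(\textnormal{log } L)^{\alpha+1}\to\logla$ from \cite{COR-asnsp};
\item it passes to the Banach adjoint $T^*\colon\lexpa\to L^{\alpha+1}_{\textnormal{exp}}$ via $(\logla)^*=\lexpa$;
\item it uses the Parseval formula of Proposition~\ref{p-3.1} to identify $T^*=-T$ on $\lexpa$.
\end{itemize}

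Your direct route (extension by zero, a Calder\'on-type bound for $(Tf)^*$, then the two logarithmic integrals, which you compute correctly) gives more. It yields the pointwise bound $(Tf)^*(t)\lesssim\|f\|_{\lexpa}(\log\frac{2e}{t})^{\alpha+1}$ and needs neither Parseval nor the earlier paper. What the duality proof buys in exchange is the identity $T^*=-T$ itself, i.e.\ that $\tinf$ is minus the adjoint of $T\colon\logl\to L^1$. The paper relies on this later: Schauder/Gantmacher in Remark~\ref{r-3.3}, Remark~\ref{r-5.3}, and Proposition~\ref{p-5.5}(i). Your argument does not supply it.

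One step in your justification needs correcting. The estimate $(Hf)^*(t)\le C\big(\frac1t\int_0^t f^*(s)\,ds+\int_t^\infty f^*(s)\frac{ds}{s}\big)$ is classical, but it does \emph{not} follow from weak type $(1,1)$ plus a strong $(p,p)$ bound for a fixed finite $p$. Those two bounds only give the Calder\'on operator for the pair $(1,p)$, whose second term is $t^{-1/p}\int_t^\infty s^{1/p}f^*(s)\frac{ds}{s}$. For $f=\chi_{(-1,1)}$ this behaves like $p\,(2/t)^{1/p}$ as $t\to0^+$, a power rather than a logarithm. Indeed, the rank-one operator $f\mapsto(\int_{-1}^1 f)\,g$ with $g\in L^p\setminus\lexp$ satisfies both bounds but does not map $\linf$ into $\lexp$.

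The $(\infty,\infty)$ end therefore needs a genuinely stronger input. Any of the following works:
\begin{itemize}
\item the explicit Stein--Weiss distribution function of $H(\chi_E)$;
\item $H\colon L^\infty\to BMO$ combined with the Bennett--DeVore--Sharpley inequality;
\item the growth $\|H\|_{L^p\to L^p}=O(p)$ used over all $p$ (Zygmund's extrapolation, cf.\ Remark~\ref{r-4.4}(iii)). Together with $\|f\|_p\lesssim p^\alpha\|f\|_{\lexpa}$, this even proves the proposition directly.
\end{itemize}
So cite the estimate as the known joint weak type $(1,1;\infty,\infty)$ property of $H$, rather than deriving it from a single pair of weak-type bounds.
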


\begin{proof}
It is known  that $T\colon L(\textnormal{log } L)^{\alpha+1}\to L(\textnormal{log } L)^{\alpha}$ 
is a bounded operator, \cite[Proposition 4.4]{COR-asnsp}.
Accordingly,  its adjoint operator 
$T^*\colon \lexpa\to L^{\alpha+1}_{\textnormal{exp}}$ boundedly.

Given $f\in \lexpa\subseteq L^1$ and 
$g\in L(\textnormal{log } L)^{\alpha+1}\subseteq\logl$, it follows from 
Proposition \ref{p-3.1} that $gT(f)\in L^1$ and 
$$
\int_{-1}^{1}gT(f)=-\int_{-1}^{1}fT(g)=-\int_{-1}^{1}gT^*(f).
$$
Thus,  $T(f)\in L^{\alpha+1}_{\textnormal{exp}}$. It follows that  $T^*=-T$. Consequently, 
$T\colon \lexpa\to L^{\alpha+1}_{\textnormal{exp}}$ boundedly.

For $\alpha=0$, we have $L_{\text{exp}}^0=\linf$ and 
$L_{\text{exp}}^1=\lexp$ and so 
$T\colon \linf\to \lexp$ boundedly.
\end{proof}

When convenient, the operator $T\colon L^\infty\to \lexp$ will
also be denoted by $\tinf$, where the subscript  indicates briefly that the domain 
space is $\linf$.


\begin{remark}\label{r-3.3}
It is shown
in  Proposition 4.15 of \cite{COR-asnsp} that $T\colon\logl\to L^1$ is not  compact.
According to Schauder's Theorem  also its adjoint 
operator $T\colon \linf \to \lexp$ (cf.\ Proof of Proposition 3.2) is not  compact. 
A similar argument, via Gantmacher's Theorem, shows that 
$T\colon \linf \to \lexp$ is not a weakly compact operator
(since this is the case for $T\colon\logl\to L^1$, 
\cite[Theorem 4.7]{COR-mathnach}).

Moreover, the operator $T\colon \linf \to \lexp$ is not order bounded. For, if so,
then there exists $0\le f\in\lexp$ such that $|T(\chi_A)|\le f$, for all $A\in\mathcal{B}$. 
Since $\lexp\subseteq L^2$,
say, also $f\in L^2$, and so $\{T(\chi_A):A\in\mathcal{B}\}$ is an order bounded subset of $L^2$.
This is a contradiction to Proposition 3.2(vii) in \cite{COR-qm},
with  $X=L^2$ there.
\end{remark}


A central role is played by the bounded, continuous function 
\begin{equation}\label{eq-3.2}
 w(x) : = \sqrt{1-x^2}, \quad x \in (-1,1).
\end{equation}

\begin{proposition}\label{p-3.4}
The finite Hilbert transform $T\colon L^\infty\to \lexp$ is  injective.
\end{proposition}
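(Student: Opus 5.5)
Our plan is to deduce injectivity on $L^\infty$ from the classical $L^p$ theory recalled in the Introduction, namely Tricomi's result that $T\colon L^p\to L^p$ is injective for $2<p<\infty$. Since $L^\infty\subseteq L^p$ for every $p$, it is enough to show that $T(f)$, for $f\in L^\infty$, agrees with the value of the $L^p$-operator at $f$.

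This is immediate. The principal value integral defining $T(f)(t)$ does not depend on which space $f$ is viewed in, so $T_\infty(f)$ and the $L^p$-version of $T(f)$ are the same a.e.\ function. Hence $T_\infty(f)=0$ gives $T(f)=0$ in $L^p$ for, say, $p=3$, and Tricomi's injectivity result yields $f=0$.

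If we want an argument internal to the paper's tools instead of quoting Tricomi, we would use duality via Proposition~\ref{p-3.1}. Let $f\in L^\infty$ with $T(f)=0$. For every $g\in\logl$, Parseval's formula \eqref{eq-3.1} gives $\int_{-1}^1 fT(g)=-\int_{-1}^1 gT(f)=0$. Thus $f$ annihilates the range $T(\logl)\subseteq L^1$, and it suffices to know that this range is dense in $L^1$ (in fact $T(\logl)=L^1$ is known from \cite{COR-asnsp}). Then $\int fh=0$ for all $h\in L^1$, so $f=0$.

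The more delicate point is that $T$ is genuinely not injective on larger spaces: $T(1/w)=0$, and $1/w\in L^p$ for $p<2$. So the argument must use that $f$ is bounded. In the first route this enters through $p>2$. In the second route it enters through the surjectivity of $T\colon\logl\to L^1$, whose kernel is spanned by $1/w$; this plays no role here because we are testing on the domain side. The step we expect to need most care is citing the correct range or injectivity statement: Tricomi's injectivity for $p>2$, or surjectivity onto $L^1$ from \cite{COR-asnsp}. We would go with the $L^p$ injectivity for $p>2$, which is the cleanest.
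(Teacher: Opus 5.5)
Your first route is correct. $T$ is defined pointwise a.e.\ on all of $L^1$, so for $f\in\linf\subseteq L^3$ the function $\tinf(f)$ coincides with the $L^3$-transform of $f$. Tricomi's injectivity of $T$ on $L^p$ for $2<p<\infty$ then finishes the proof.

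The paper uses the same kind of argument but embeds $\linf$ in a different ambient space. It uses $\linf\subseteq\logl$ together with Theorem 3.4 of \cite{COR-asnsp}: for $f\in\logl$ one has $T(f)=0$ if and only if $f\in\textnormal{span}(1/w)$. It then observes that $1/w\notin\linf$. Both proofs pass to a larger space on which the kernel of $T$ is known. Yours needs only the classical $L^p$ theory already quoted in the Introduction. The paper's stays inside its $\logl$--$\lexp$ duality framework and uses a kernel fact it needs again later: in Proposition~\ref{p-5.5}(i), $\textnormal{Ker}(\tlog)=\textnormal{span}(1/w)$ identifies the weak-$\ast$ closure of $T(\linf)$.

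Your second (duality) route contains a false claim: $T(\logl)\neq L^1$. Proposition 4.12(i) of \cite{COR-asnsp} says that $\tlog\colon\logl\to L^1$ is \emph{not} surjective; Remark~\ref{r-5.3} uses this to show that $\tinf(\linf)$ is not norm-closed. So boundedness of $f$ does not enter ``through the surjectivity of $T\colon\logl\to L^1$.''

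What the duality argument actually needs is that $T(\logl)$ is dense in $L^1$. Since $(\tlog)^*=-\tinf$, density of the range of $\tlog$ is equivalent to injectivity of $\tinf$, which is the statement being proved. So it cannot simply be quoted. It can be justified independently: Tricomi gives $T(L^p)=L^p$ for $1<p<2$, we have $L^p\subseteq\logl$, and $L^p$ is dense in $L^1$. With that repair the second route also works. As written, though, it rests on an incorrect citation.
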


\begin{proof}
Theorem 3.4 in \cite{COR-asnsp} implies, 
for $f\in\logl$, that $T(f)=0$ if and only if $f=c/w$ for some 
$c\in\C$. Since $1/w\notin \linf$, it follows that $T\colon L^\infty\to \lexp$ is injective.
\end{proof}


\begin{proposition}\label{p-3.5}
The  smallest r.i.\ space over $(-1,1)$ which contains $T(\linf)$ is $\lexp$.
\end{proposition}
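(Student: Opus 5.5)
The statement contains two inclusions, and I would prove them separately. By Proposition \ref{p-3.2}, $T(\linf)\subseteq\lexp$. So it only remains to show that any r.i.\ space $X$ over $(-1,1)$ with $T(\linf)\subseteq X$ must contain $\lexp$.

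The plan is to exhibit one bounded function whose Hilbert transform has the extremal rearrangement. The natural choice is $f:=\chi_{(0,1)}$. A direct computation gives
\[
T(\chi_{(0,1)})(t)=\frac{1}{\pi}\log\Big|\frac{1-t}{t}\Big|, \quad t\in(-1,1).
\]
Near $t=0$ this behaves like $\frac1\pi\log(1/|t|)$. Near $t=1$ it behaves like $\frac1\pi\log|1-t|$, and it stays bounded elsewhere. Hence the decreasing rearrangement satisfies
\[
T(\chi_{(0,1)})^*(s)\ge c\log\Big(\frac{2e}{s}\Big)-C, \quad 0<s<2,
\]
for some constants $c,C>0$. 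This holds because the set where $|T(\chi_{(0,1)})|>\lambda$ contains an interval around $0$ of length comparable to $e^{-\pi\lambda}$.

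Now let $X$ be any r.i.\ space containing $T(\linf)$. Then $h:=|T(\chi_{(0,1)})|+C\in X$, since $X\supseteq\linf$ and $X$ is a lattice. Moreover $h^*(s)\ge c\log(2e/s)$ on $(0,2)$. Take an arbitrary $g\in\lexp$. By the characterization recalled in Section \ref{S2}, $g^*(s)\le c(g)\log(2e/s)$, so $g^*\le (c(g)/c)\,h^*$. Rearrangement invariance (the defining property $g^*\le F^*$, $F\in X$ $\Rightarrow g\in X$, applied to $F=(c(g)/c)h$) then gives $g\in X$. Hence $\lexp\subseteq X$. Combined with the first paragraph, $\lexp$ is the smallest r.i.\ space containing $T(\linf)$.

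The main obstacle is the rearrangement lower bound for $T(\chi_{(0,1)})$, which requires some care with constants. I would handle it by restricting to $t\in(0,1/2)$. There $\frac{1}{\pi}\log\frac{1-t}{t}\ge\frac1\pi\log\frac{1}{2t}$, which is decreasing in $t$ and has distribution function $\mu\{t:\cdot>\lambda\}=\frac12e^{-\pi\lambda}$ for large $\lambda$. Inverting this gives $h^*(s)\ge \frac1\pi\log\frac{1}{4s}$ for small $s$. Absorbing the additive constant, and using boundedness below on the remaining range of $s$, yields the comparison with $\log(2e/s)$ for all $0<s<2$.
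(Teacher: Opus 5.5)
Your proof is correct and follows essentially the same route as the paper: the paper takes $T(\chi_{(-1,1)})=\frac1\pi\log\frac{1-x}{1+x}$, computes its rearrangement exactly as $\log\frac{4-t}{t}$, and adds the constant $1$ so that the rearrangement dominates $\log(2e/t)$. You instead use $\chi_{(0,1)}$ with only a lower bound on the rearrangement, which works equally well, and you also state explicitly the inclusion $T(\linf)\subseteq\lexp$ from Proposition~\ref{p-3.2}, which the paper's proof leaves implicit.
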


\begin{proof}   
Let $X$ be a r.i.\ space over $(-1,1)$ such that $T(\linf)\subseteq X$.
Then $T(\chi_{(-1,1)})\in X$. From the definition of $T$ it follows  
that $T(\chi_{(-1,1)})(x)=\frac1\pi\log\big(\frac{1-x}{1+x}\big)$, for $x\in(-1,1)$. 
Direct calculation also shows that 
$$
\left(\log\Big(\frac{1-x}{1+x}\Big)\right)^*(t)=\log\Big(\frac{4-t}{t}\Big),\quad 0<t<2.
$$
Consider the function $F(x):=1+|\log\big(\frac{1-x}{1+x}\big)|$, for $x\in(-1,1)$,  
which belongs to $X$ (as the constant function $1\in L^\infty$  belongs to $X$). Then
$$
F^*(t):=1+\left(\log\Big(\frac{1-x}{1+x}\Big)\right)^*(t)\ge \log(2e/t) ,\quad 0<t<2.
$$
Let $f\in\lexp$. Then there exists a  positive constant $M_f$ such that
$f^*(t)\le M_f \log(2e/t)$, for $0<t<2$. It follows that 
$f^*(t)\le  M_fF^*(t)$, for $0<t<2$. Since $X$ is r.i.\ and $F\in X$, this
implies that $f\in X$, \cite[Theorem II.4.6]{BS}. So,  $\lexp\subseteq X$.
\end{proof}

\begin{remark}\label{r-3.6} 
In the proof of Proposition \ref{p-3.5} it was noted that 
$h:=\pi T(\chi_{(-1,1)})$ satisfies  $h^*(s)\ge   (\log(2e/s)-1)$ for
$s\in(-1,1)$, that is 
$$
\frac{\int_0^th^*(s)\,ds}{t\log(2e/t)}\ge
\frac{\int_0^t(\log(2e/s)-1)\,ds}{t\log(2e/t)},\quad 0<t<2.
$$
Direct calculation shows that 
$\lim_{t\to0^+}\frac{\int_0^t(\log(2e/s)-1)\,ds}{t\log(2e/t)}=1\not=0$
and so $h\not\in(\lexp)_a$; see \eqref{eq-2.1}.
Accordingly, $T(L^\infty)\not\subseteq(\lexp)_a$.
Since $(\lexp)_a$ does not have the Fatou property,
it is not a r.i.\ space and so the previous observation does
not follow from Proposition \ref{p-3.5}.
\end{remark}


\section{The auxiliary operator  $\widecheck{T}$ on $\linf$}
\label{S4}


Let $f\in L^1$ satisfy  $f/w\in L^1$. Kolmogorov's Theorem ensures that
$T(f/w)\in L^0$ and so  the function $\widecheck T(f)\in L^0$, 
given pointwise a.e.\ in $(-1,1)$ by
\begin{equation}\label{eq-4.1}
\widecheck{T}(f) :=-w\,T\,\Big(\frac{f}{w}\Big),
\end{equation}
is well defined. For $f\in L^1$, the function $\widehat T(f)\in L^0$, 
given pointwise a.e.\ in $(-1,1)$ by
\begin{equation}\label{eq-4.2}
\widehat{T}(f) :=-\frac{1}{w}T(wf),
\end{equation}
is also well defined.

The operator $f\mapsto \widehat T(f)$ arose when solving
the airfoil equation \eqref{airfoil} (i.e., for the inversion of $T$) for 
$T\colon L^p\to L^p$ with $1<p<2$, and, more generally,
for $T\colon X\to X$ with $X$ a r.i.\ space
satisfying $1/2<\underline{\alpha}_X\le \overline{\alpha}_X<1$.
The operator $f\mapsto \widecheck T(f)$ arose when determining the
solution of the airfoil equation for 
$T\colon L^p\to L^p$ with $2<p<\infty$ and for 
$T\colon X\to X$ with $X$ a r.i.\ space
satisfying $0<\underline{\alpha}_X\le \overline{\alpha}_X<1/2$.
We refer to \cite{COR-ampa} and the references therein.


In this section we investigate the operators $\widecheck T$
and $\widehat T$ acting between (possibly) different pairs of
r.i.\ spaces $X$ and $Y$, with no restriction on their Boyd indices.

It is well known that positive operators between B.f.s.' 
are (automatically) continuous. The finite Hilbert transform is far from being positive.
However, a related result is available.

\begin{lemma}\label{l-4.1}
Let $X, Y$ be B.f.s.' over $(-1,1)$.
\begin{itemize}
\item[(i)] If $T(X)\subseteq Y$, then $T\colon X\to Y$ is continuous.
\item[(ii)] If $\widehat T(X)\subseteq Y$, then $\widehat T\colon X\to Y$ is continuous.
\item[(iii)] If $\widecheck T(X)\subseteq Y$, then $\widecheck T\colon X\to Y$ is continuous.
\end{itemize}
\end{lemma}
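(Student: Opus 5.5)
The plan is to use the Closed Graph Theorem in each of the three cases. Both $X$ and $Y$ are Banach spaces, so it suffices to show that each operator has closed graph. For (i), let $f_n\to f$ in $X$ and $T(f_n)\to g$ in $Y$; we must show $T(f)=g$. Since $X\subseteq L^1$ and $Y\subseteq L^1$ with continuous inclusions, we have $f_n\to f$ in $L^1$ and $T(f_n)\to g$ in $L^1$. The finite Hilbert transform is continuous from $L^1$ into $L^0$ (convergence in measure), by Kolmogorov's weak type $(1,1)$ theorem. Hence $T(f_n)\to T(f)$ in measure. Convergence in $L^1$ also implies convergence in measure, so $T(f_n)\to g$ in measure. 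Since $L^0$ is Hausdorff for the topology of convergence in measure, $g=T(f)$.

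Case (ii) follows the same argument with one extra step. If $f_n\to f$ in $X$, then $wf_n\to wf$ in $L^1$, because $w$ is bounded. Hence $T(wf_n)\to T(wf)$ in measure. Multiplication by $1/w$ preserves convergence in measure on a finite measure space, since $1/w$ is finite a.e. Indeed, for $\varepsilon>0$ we can pick a set $A$ on which $1/w\le M$ with $\mu(A^c)$ small, and then use $\mu(\{|h_n/w|>\delta\})\le\mu(\{|h_n|>\delta/M\})+\mu(A^c)$. So $\widehat T(f_n)\to\widehat T(f)$ in measure. Combined with $\widehat T(f_n)\to g$ in $L^1$, hence in measure, this gives $g=\widehat T(f)$.

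Case (iii) is the delicate one, because $f/w$ need not lie in $L^1$ for $f\in X$. I interpret the hypothesis $\widecheck T(X)\subseteq Y$ as including that $\widecheck T(f)$ is defined, i.e.\ $f/w\in L^1$, for every $f\in X$. Even so, $f_n\to f$ in $X$ does not obviously give $f_n/w\to f/w$ in $L^1$. My first attempt is to localize. Fix $0<r<1$ and split $f=f\chi_{[-r,r]}+f\chi_{E_r}$ with $E_r:=(-1,1)\setminus[-r,r]$. On $[-r,r]$ the weight $1/w$ is bounded, so $f_n\chi_{[-r,r]}/w\to f\chi_{[-r,r]}/w$ in $L^1$. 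By Kolmogorov, $T(f_n\chi_{[-r,r]}/w)\to T(f\chi_{[-r,r]}/w)$ in measure.

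For the tails I would avoid norm estimates and instead use that $X$ is an ideal containing $f/w\cdot w=f$. If that localization is awkward, a cleaner route is to transfer the problem to a weighted space. Consider the B.f.s.\ $X_w:=\{h: wh\in X\}$ with norm $\|wh\|_X$. The hypothesis implies $X_w\subseteq L^1$: this needs $\{f/w: f\in X\}\subseteq L^1$, and that inclusion is automatically continuous, because it is a positive operator between Banach lattices (the paper already uses this principle). Then $f_n\to f$ in $X$ means $f_n/w\to f/w$ in $X_w$, hence in $L^1$. By Kolmogorov, $T(f_n/w)\to T(f/w)$ in measure, and multiplying by $-w$ preserves this convergence. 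The closed-graph argument then finishes as before. The main obstacle, as I expect, is justifying that $X_w$ is a Banach lattice ideal, i.e.\ that it is complete, so that positivity yields continuity of $f\mapsto f/w$ from $X$ into $L^1$. Completeness should follow from the completeness of $X$ via the isometry $h\mapsto wh$.
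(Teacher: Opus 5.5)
Your proposal is correct and follows essentially the paper's route: the Closed Graph Theorem plus Kolmogorov's theorem (your use of convergence in measure and the Hausdorff property of $L^0$ is equivalent to the paper's passage to a.e.\ convergent subsequences), and for (iii) the automatic continuity of the positive map $f\mapsto f/w$ from $X$ into $L^1$. Your detour through $X_w$ is only a relabelling of that map, since the paper applies positivity directly to $f\mapsto f/w$ on $X$, and the abandoned localization attempt can simply be deleted.
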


\begin{proof} 
(i) Let $f_n\to0$ in $X$ and $T(f_n)\to g$ in $Y$. Since 
$X\subseteq L^1$ continuously, also $f_n\to0$ in $L^1$.
Kolmogorov's Theorem implies that $T(f_{n_j})\to 0$ a.e.\ for some subsequence
$\{f_{n_j }\}_{j=1}^\infty$ of $\{f_{n}\}_{n=1}^\infty$. Also $T(f_{n_j})\to g$ in $L^1$ 
(as $Y\subseteq L^1$ continuously) and so
$T(f_{n_{j_k}})\to g$ a.e.\ for some subsequence
$\{f_{n_{j_k} }\}_{k=1}^\infty$ of $\{f_{n_j }\}_{j=1}^\infty$.
Since $T(f_{n_{j_k}})\to 0$ a.e., we can conclude that $g=0$. By the Closed Graph Theorem,
$T\colon X\to Y$ is continuous.

(ii) Let $f_n\to0$ in $X$ and $\widehat T(f_n)\to g$ in $Y$. 
Since $|wf_n|\le |f_n|$ for $n\in\N$, also $(wf_n)\to0$ in $X$. Hence,
$(wf_n)\to0$ in $L^1$.
Kolmogorov's Theorem implies that $T(wf_{n_j})\to 0$ a.e.\ for some subsequence
$\{f_{n_j }\}_{j=1}^\infty$ of $\{f_{n}\}_{n=1}^\infty$. Then also
$(-1/w)T(wf_{n_j})=\widehat T(f_{n_j})\to 0$ a.e. 
Moreover, $\widehat T(f_{n_j})\to g$ in $L^1$ and so 
$\widehat T(f_{n_{j_k}})\to g$ a.e.\ for some subsequence
$\{f_{n_{j_k} }\}_{k=1}^\infty$ of $\{f_{n_j }\}_{j=1}^\infty$.
Since $(-1/w)T(wf_{n_{j_k}})=\widehat T(f_{n_{j_k} })\to 0$ a.e., 
we can conclude that $g=0$. By the Closed Graph Theorem,
$\widehat T\colon X\to Y$ is continuous.

(iii) Let $f_n\to0$ in $X$ and $\widecheck T(f_n)\to g$ in $Y$. 
We point out that $\widecheck T$ defined on $X$ means that $(f/w)\in L^1$ for every $f\in X$
and that $\widecheck T(f)\in Y$. Accordingly,
the positive linear operator $f\in X\mapsto (f/w)\in L^1$ is well defined and hence, 
it is continuous. So, $(f_n/w)\to0$ in $L^1$.
Kolmogorov's Theorem implies that $T(f_{n_j}/w)\to 0$ a.e.\ for some subsequence
$\{f_{n_j }\}_{j=1}^\infty$ of $\{f_{n}\}_{n=1}^\infty$.  Then also
$-wT(f_{n_j}/w)=\widecheck T(f_{n_j})\to 0$ a.e. 
Moreover, $\widecheck T(f_{n_j})\to g$ in $L^1$ and so
$\widecheck T(f_{n_{j_k}})\to g$ a.e.\ for some subsequence
$\{f_{n_{j_k} }\}_{k=1}^\infty$ of $\{f_{n_j }\}_{j=1}^\infty$.
Since $-wT(f_{n_{j_k}}/w)=\widecheck T(f_{n_{j_k} })\to 0$ a.e., 
we can conclude that $g=0$. By the Closed Graph Theorem,
$\widecheck T\colon X\to Y$ is continuous.
\end{proof}


The following result makes the relation between  the operators 
$\widecheck{T}$ and $\widehat{T}$  precise.

\begin{theorem}\label{t-4.2}
Let $X, Y$ be  r.i.\ spaces such that 
$X\subseteq\logl$ and $L^{2,\infty}\subseteq Y$. Suppose that 
$\widehat{T}\colon X\to Y$ boundedly. Then 
$$
\widecheck{T}\colon Y'\to X'
$$  boundedly.  Moreover,
for every $f\in Y'$ and $g\in X$ it is the case that 
\begin{equation}\label{eq-4.3}
\int_{-1}^1\widehat{T}(g)f=-\int_{-1}^1g\widecheck{T}(f).
\end{equation}
If, in addition, both $X$ and $Y$ are separable, then 
$(\widehat{T})^*=-\widecheck{T}$.
\end{theorem}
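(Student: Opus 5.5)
The plan is to obtain everything from the Parseval formula of Proposition~\ref{p-3.1}, applied to the pair $f/w$ and $wg$. Fix $f\in Y'$ and $g\in X$.

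First I check that $\widecheck T(f)$ is well defined, that is, $f/w\in L^1$. Since $(1/w)^*(t)\le C t^{-1/2}$, the function $1/w$ belongs to $L^{2,\infty}$, and hence to $Y$. As $f\in Y'$, this gives $f/w=f\cdot(1/w)\in L^1$. Next, $wg\in\logl$, because $|wg|\le|g|$, $g\in X\subseteq\logl$ and $\logl$ is an ideal.

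The key step is to verify the hypothesis of Proposition~\ref{p-3.1} for the pair $f/w\in L^1$ and $wg\in\logl$. We need $(f/w)\,T(wg\chi_A)\in L^1$ for every $A\in\mathcal B$. By \eqref{eq-4.2},
\[
\frac{f}{w}\,T(wg\chi_A)=-f\,\widehat T(g\chi_A).
\]
Since $X$ is an ideal, $g\chi_A\in X$, so $\widehat T(g\chi_A)\in Y$. As $f\in Y'$, the product is integrable. I expect this to be the only delicate point: it is exactly where the hypothesis $\widehat T(X)\subseteq Y$ must be used for all truncations $g\chi_A$, not only for $g$ itself.

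Proposition~\ref{p-3.1} then yields $wg\,T(f/w)\in L^1$, that is, $g\,\widecheck T(f)\in L^1$. It also yields
\[
\int_{-1}^1 \frac{f}{w}\,T(wg)=-\int_{-1}^1 wg\,T\Big(\frac fw\Big).
\]
The left side equals $-\int \widehat T(g)f$ and the right side equals $\int g\,\widecheck T(f)$, which is precisely \eqref{eq-4.3}.

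Since $g\in X$ was arbitrary, the fact that $g\,\widecheck T(f)\in L^1$ shows that $\widecheck T(f)\in X'$, by the definition of the associate space. For the norm bound I use \eqref{eq-4.3}:
\[
\|\widecheck T(f)\|_{X'}=\sup_{\|g\|_X\le1}\Big|\int g\,\widecheck T(f)\Big|
=\sup_{\|g\|_X\le1}\Big|\int \widehat T(g)f\Big|\le \|\widehat T\|\,\|f\|_{Y'}.
\]
Alternatively, boundedness follows from Lemma~\ref{l-4.1}(iii) once the inclusion $\widecheck T(Y')\subseteq X'$ is known.

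Finally, suppose $X$ and $Y$ are separable. Then they have $\sigma$-order continuous norm, so $X^*=X'$ and $Y^*=Y'$ via the integral pairing. In that case \eqref{eq-4.3} says exactly $\langle \widehat T(g),f\rangle=\langle g,-\widecheck T(f)\rangle$ for all $g\in X$ and $f\in Y^*$, that is, $(\widehat T)^*=-\widecheck T$.
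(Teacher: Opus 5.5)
Your proposal is correct and follows essentially the paper's proof: you apply the Parseval formula of Proposition~\ref{p-3.1} to the pair $f/w\in L^1$ and $wg\in\logl$, and you verify its hypothesis through $\frac{f}{w}T(wg\chi_A)=-f\,\widehat T(g\chi_A)\in L^1$. The only deviation is in the boundedness step, where you get it directly from \eqref{eq-4.3} and H\"older's inequality for $Y,Y'$, which yields the explicit bound $\|\widecheck T\|_{Y'\to X'}\le\|\widehat T\|_{X\to Y}$; the paper instead invokes the closed-graph argument of Lemma~\ref{l-4.1}(iii), which you also note as an alternative.
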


\begin{proof}
Let $f\in Y'$. Observe that 
$f/w\in L^1$ because 
$f\in Y'\subseteq ( L^{2,\infty})'=L^{2,1}$ and $1/w\in L^{2,\infty}$.
Hence, $\widecheck T(f)=-wT(f/w)\in L^0$.
Let $g\in X$. Given $A\in\mathcal{B}$, 
note that $\widehat T(g\chi_A)\in Y$, and so H\"older's inequality yields
$$
\frac{f}{w}T\big(wg\chi_A\big)= - f \widehat T\big(g\chi_A\big)\in L^1.
$$
This allows us to apply Proposition \ref{p-3.1} 
(with $(f/w)\in L^1$ in place of $f$ and $(gw)\in\logl$ in place of $g$)
to conclude that $-g\widecheck T(f)=(gw)\,T(f/w)\in L^1$ and
$$
\int_{-1}^1 g\widecheck{T}(f)=-\int_{-1}^1 gw\, T\Big(\frac{f}{w}\Big)
=\int_{-1}^1 \frac{f}{w} T(gw)=-\int_{-1}^1 f \widehat T(g).
$$
This verifies \eqref{eq-4.3} and, because $f\widehat{T}(g)\in L^1$
for every $g\in X$, also shows that $\widecheck T(f)\in X'$.
Hence, $\widecheck T(Y')\subseteq X'$ and so Lemma \ref{l-4.1}
implies that $\widecheck{T}\colon Y'\to X'$ is bounded.

When $X$ and $Y$ are both separable
(equivalently, have $\sigma$-order continuous norms, 
\cite[Corollary I.5.6]{BS}), we have $X^*=X'$ and $Y^*=Y'$. The
identity $(\widehat T)^*=-\widecheck T$ is then clear from \eqref{eq-4.3}.  
\end{proof}


\begin{corollary}\label{c-4.3}
For each $\alpha\ge0$ the operator 
$\widecheck{T}\colon \lexpa\to L^{\alpha+1}_{\textnormal{exp}}$ is bounded.
In particular, $\widecheck{T}\colon \linf\to\lexp$ is bounded.
\end{corollary}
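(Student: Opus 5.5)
The plan is to apply Theorem \ref{t-4.2} with $X:=L(\textnormal{log } L)^{\alpha+1}$ and $Y:=L(\textnormal{log } L)^{\alpha}$. Both are r.i.\ spaces with $\sigma$-order continuous norm. Their associate spaces are $X'=L^{\alpha+1}_{\textnormal{exp}}$ and $Y'=\lexpa$, as recalled in Section \ref{S2}. So Theorem \ref{t-4.2} will give exactly $\widecheck T\colon \lexpa\to L^{\alpha+1}_{\textnormal{exp}}$ boundedly, and the case $\alpha=0$ is the statement $\widecheck T\colon\linf\to\lexp$.

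The hypotheses of Theorem \ref{t-4.2} are checked in turn.

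\emph{First, $X\subseteq\logl$.} This is the inclusion $\loglb\subseteq\logla$ for $\beta=\alpha+1\ge 1$.

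\emph{Second, $L^{2,\infty}\subseteq Y$.} For $p<2$ we have $L^{2,\infty}\subseteq L^p$ on the finite interval $(-1,1)$. Also $L^p\subseteq\logla$ for every $1<p<\infty$. Taking, say, $p=3/2$ gives $L^{2,\infty}\subseteq Y$.

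\emph{Third, $\widehat T\colon X\to Y$ is bounded.} This is the step requiring actual work. By Lemma \ref{l-4.1}(ii), it suffices to show $\widehat T(X)\subseteq Y$, that is, $\frac1w T(wg)\in L(\textnormal{log } L)^{\alpha}$ for every $g\in L(\textnormal{log } L)^{\alpha+1}$.

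For this I would write
\[
\widehat T(g)=-\frac1w T(wg)=-T(g)-\frac1w\big(T(wg)-wT(g)\big).
\]
The commutator is explicit:
\[
T(wg)(t)-w(t)T(g)(t)=\frac1\pi\int_{-1}^1 \frac{w(x)-w(t)}{x-t}\,g(x)\,dx .
\]
Using $w(x)-w(t)=\dfrac{t^2-x^2}{w(x)+w(t)}$, the kernel becomes $-\dfrac{x+t}{w(x)+w(t)}$, which is bounded by $\dfrac{2}{w(t)}$.

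The first piece is handled by \cite[Proposition 4.4]{COR-asnsp}: $T(g)\in L(\textnormal{log } L)^{\alpha}$.

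The second piece needs a closer look. Dividing the commutator by $w(t)$ gives a function bounded by $C\|g\|_{L^1}\,w(t)^{-2}$, which is not integrable. So a crude bound is insufficient, and one must use the finer structure of the kernel near $\pm1$: $\dfrac{x+t}{w(t)(w(x)+w(t))}$.

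I expect this to be the main obstacle. If the direct estimate proves messy, the fallback is to cite the known boundedness of $\widehat T$ between Zygmund spaces from \cite{COR-asnsp}, where $\widehat T\colon \logl\to L^1$ and its $L(\textnormal{log } L)^{\alpha+1}\to L(\textnormal{log } L)^{\alpha}$ analogue appear in the inversion of $T$ on $\logl$. I would assume that result in the form $\widehat T\colon L(\textnormal{log } L)^{\alpha+1}\to L(\textnormal{log } L)^{\alpha}$ boundedly.

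With the three hypotheses in place, Theorem \ref{t-4.2} yields the corollary.
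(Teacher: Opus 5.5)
Your proposal is correct and follows the paper's proof: apply Theorem~\ref{t-4.2} with $X=L(\textnormal{log } L)^{\alpha+1}$ and $Y=\logla$, check $X\subseteq\logl$ and $L^{2,\infty}\subseteq Y$ as you do, and get the boundedness of $\widehat T\colon X\to Y$ from \cite[Proposition 4.5]{COR-asnsp}, which is exactly your fallback citation. The commutator computation is not needed, and as you note it is unfinished, so you can drop it.
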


\begin{proof}
We can apply Theorem \ref{t-4.2} to $X=L(\textnormal{log } L)^{\alpha+1}$ 
and $Y=\logla$ since 
$\widehat{T}\colon L(\textnormal{log } L)^{\alpha+1}\to \logla$ boundedly,
\cite[Proposition 4.5]{COR-asnsp}, with $X\subseteq \logl$ and 
$L^{2,\infty}\subseteq Y$.
\end{proof}

\begin{remark}\label{r-4.4}
(i) Since $\widehat{T}_p:=\widehat{T}\colon L^p\to L^p$ boundedly, for $1<p<2$, 
 \cite[Lemma 2.3]{OE}, it follows from
Theorem \ref{t-4.2} that 
$\widecheck{T}_{p'}:=\widecheck{T}\colon L^{p'}\to L^{p'}$ 
satisfies $(\widehat{T}_p)^*=-\widecheck{T}_{p'}$ with $2<p'<\infty$;
compare  this with  \cite[Proposition 2.6]{OE}.

(ii) For a  r.i.\ space $X$ with
$1/2<\underline{\alpha}_X\le \overline{\alpha}_X<1$, it is known that  
$\widehat{T}\colon X\to X$ boundedly, \cite[Theorem 3.2]{COR-ampa}.
If $X$ is also separable, then 
Theorem \ref{t-4.2} implies that  $\widecheck{T}\colon X'\to X'$ 
boundedly (denoted by $\widecheck{T}_{X'}$)
and $(\widehat{T}_X)^*=-\widecheck{T}_{X'}$
with $0<\underline{\alpha}_{X'}\le \overline{\alpha}_{X'}<1/2$;
compare with \cite[Theorem 3.3]{COR-ampa}.

(iii) A proof of Corollary \ref{c-4.3} avoiding the use of duality is also possible. 
This requires 
showing, for some constant $C>0$ and some $2<p_0<\infty$, that 
$\|\widecheck{T}\|_{L^p\to L^p}\le C p$ for all $p_0<p<\infty$, and then applying 
the classical result of Zygmund on extrapolation;
see \cite[Theorem XII.(4$\cdot$41)]{Z}, on p. 119 of volume II.
Obtaining the uniform estimate  needed for $\|\widecheck{T}\|_{L^p\to L^p}$ 
follows, via analogy,  the steps established in the corresponding study 
of the operator $\widehat T$, in particular,
by adapting Lemmas 4.6, 4.7 and 6.1 in \cite{COR-asnsp}.
\end{remark}


\section{Inversion of the FHT on $\linf$}
\label{S5}


In this section we study the action of  $T$ and $\widecheck{T}$ on $\linf$.
In particular, the range $T(L^\infty)$ of $T$ in $\lexp$ is 
characterized, which is then used to solve the airfoil equation.

Compare the following result with  \cite[Proposition 2.6]{OE} and 
\cite[Theorem 3.3]{COR-ampa}.

\begin{theorem}\label{t-5.1}  
The following assertions hold.
\begin{itemize}
\item[(i)]   The operator $\tinf\colon\linf\to\lexp$ is  injective.
\item[(ii)] Let $f\in\linf$. Then  $T(f)\in\lexp$ and 
$\widecheck{T} T(f)=f$.
\item[(iii)] The operator $Q\colon\linf\to\linf$  given by
\begin{equation}\label{eq-5.1}
Q (f):= \left(\frac1\pi\int_{-1}^1\frac{f}{w}\right)\chi_{(-1,1)},  
\quad  f\in {\linf} ,
\end{equation}
is a bounded, rank-$1$, projection of $L^\infty$ onto 
$\textnormal{span}(\chi_{(-1,1)})\subseteq L^\infty$ satisfying 
$\|Q\|_{\linf\to\linf}=1$.
Furthermore, 
$$
T \widecheck{T}(f)=f-Q(f),\quad f\in\linf. 
$$
\item[(iv)]  For each $f\in\linf$ it is the case that 
$$ 
\int_{-1}^1\frac{T(f)}{w}=0.
$$
\end{itemize}
\end{theorem}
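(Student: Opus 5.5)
Part (i) is Proposition~\ref{p-3.4}, so nothing new is needed there. The remaining parts will be obtained by transferring known $L^p$ inversion formulas to $\linf$, which is legitimate since $\linf\subseteq L^p$ for every $p$. The classical results (\cite[Proposition 2.6]{OE}, \cite[Theorem 3.3]{COR-ampa}) say that for $2<p<\infty$ and $f\in L^p$ we have $T(f)\in L^p$, that $\widecheck T T(f)=f$, and that $T\widecheck T(f)=f-\big(\frac1\pi\int_{-1}^1 f/w\big)\chi_{(-1,1)}$. Note that $f/w\in L^1$ for $f\in L^p$, $p>2$, by H\"older, since $1/w\in L^q$ for $q<2$. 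For $f\in\linf$, fix any $p>2$ and apply these identities in $L^p$. The functions $T(f)$ and $\widecheck T(f)$ are defined pointwise a.e.\ by the same principal value integrals regardless of the ambient space, so the identities hold a.e. By Proposition~\ref{p-3.2} and Corollary~\ref{c-4.3} we also have $T(f),\widecheck T(f)\in\lexp$. This gives (ii) and the formula in (iii).

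If I wanted to avoid quoting the $L^p$ theory, I would instead derive the identities by duality. The plan is to pair with $g\in\logl$ (or with indicator functions $\chi_A$). Proposition~\ref{p-3.1} gives $\int gT(f)=-\int fT(g)$, and \eqref{eq-4.3} with $X=L(\log L)^2$, $Y=\logl$ gives $\int g\widecheck T(h)=-\int \widehat T(g)h$. Chaining these gives $\int g\,\widecheck T T(f)=\int \widehat T(g)T(f)=-\int T\widehat T(g) f$. I would then use the known identity $T\widehat T(g)=-g$ on the appropriate $L\log L$-type space \cite{COR-asnsp}, which yields $\widecheck TT(f)=f$. The delicate point in this route is verifying the integrability hypotheses of Proposition~\ref{p-3.1} at each step. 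This is why I prefer the $L^p$ route as the main argument.

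For the operator $Q$: it is linear with $Q(\chi_{(-1,1)})=\frac1\pi\int_{-1}^1 1/w=\chi_{(-1,1)}$ because $\int_{-1}^1 dx/\sqrt{1-x^2}=\pi$. Hence $Q^2=Q$, and $Q$ has rank one with range $\textnormal{span}(\chi_{(-1,1)})$. Moreover $\|Q(f)\|_\infty\le \frac1\pi\|f\|_\infty\int 1/w=\|f\|_\infty$, with equality at $f=\chi_{(-1,1)}$, so $\|Q\|=1$.

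For (iv): first apply (iii) with $f$ replaced by $T(f)$. This is not directly possible, since $T(f)\notin\linf$ in general, but it is fine in $L^p$ for $p>2$. There, together with (ii), it gives $T(f)=T\widecheck T(T(f))+Q(T(f))=T(f)+Q(T(f))$, so $\frac1\pi\int T(f)/w=0$. Alternatively, apply Proposition~\ref{p-3.1} with $g=1/w\in\logl$: this gives $\int T(f)/w=-\int fT(1/w)=0$, because $T(1/w)=0$ (Theorem 3.4 of \cite{COR-asnsp}, as quoted in the proof of Proposition~\ref{p-3.4}). This second argument is the cleanest, and I would use it. The main obstacle overall is justifying the composition identities in (ii)–(iii) rigorously. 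I would handle it by reducing to the established $L^p$, $p>2$, theory as above.
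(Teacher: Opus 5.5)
Your main route matches the paper's. Part (i) is Proposition~\ref{p-3.4}. For (ii)--(iv) you view $f\in\linf$ as an element of $L^p$ for a fixed $2<p<\infty$ and invoke Theorem~3.3 of \cite{COR-ampa} (equivalently \cite[Proposition 2.6]{OE}), and $\|Q\|_{\linf\to\linf}=1$ comes from $\int_{-1}^1 1/w=\pi$.

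The only real difference is your preferred proof of (iv): Proposition~\ref{p-3.1} with $g=1/w\in\logl$, combined with $T(1/w)=0$. This is correct and somewhat more self-contained. It needs only the kernel description \cite[Theorem 3.4]{COR-asnsp}, already used for Proposition~\ref{p-3.4}, and it gives $T(f)/w\in L^1$ for free. The paper instead reads (iv) directly off the $L^p$ theorem, which is also what your first argument for (iv) amounts to.

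In the duality aside you do not rely on, there are two sign slips. First, \eqref{eq-4.3} gives $\int g\,\widecheck T T(f)=-\int \widehat T(g)T(f)$, not $+$. Second, the inversion identity is $T\widehat T(g)=g$, not $-g$. The two slips cancel, so your conclusion $\widecheck TT(f)=f$ survives, but each of those steps as written is off by a sign.
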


\begin{proof}
Proposition \ref{p-3.4} establishes (i).
Let $f\in \linf$ and fix  any $2<p<\infty$, in which case $f\in L^p$. It follows from
Theorem 3.3 in  \cite{COR-ampa} for $X=L^p$ that (ii), (iii) and (iv) hold.
That $\|Q\|_{\linf\to\linf}=1$ follows from \eqref{eq-5.1} and $\int_{-1}^1(1/w)=\pi$.
\end{proof}

\begin{remark}\label{r-5.2} 
The operator $Q_{\lexp}\colon\lexp\to\lexp$ given by
$$
Q_{\lexp}(g):=\left(\frac1\pi\int_{-1}^1\frac{g}{w}\right)\chi_{(-1,1)},\quad g\in\lexp,
$$
is also a bounded, rank-1 projection of $\lexp$ onto 
$\textnormal{span}(\chi_{(-1,1)})\subseteq \lexp$. Indeed,
since $1/w\in\logl$, H\"older's inequality yields that $g/w\in L^1$ and
$$
\big|Q_{\lexp}(g)\big|
\le \frac1\pi\Big\|\frac1w\Big\|_{\logl}\|g\|_{\lexp}\chi_{(-1,1)},\quad g\in\lexp,
$$
with $\chi_{(-1,1)}\in L^\infty\subseteq\lexp$. 
The previous pointwise inequality on $(-1,1)$ implies
that $Q_{\lexp}(g)\in L^\infty\subseteq\lexp$ and that
$Q_{\lexp}$ is bounded with operator norm 
$$
\big\|Q_{\lexp}\big\|\le \frac1\pi\Big\|\frac1w\Big\|_{\logl}\|\chi_{(-1,1)}\|_{\lexp}
=\frac1\pi\Big\|\frac1w\Big\|_{\logl}.
$$
Clearly the range $Q_{\lexp}(\lexp)=\textnormal{span}(\chi_{(-1,1)})\subseteq L^\infty
\subseteq\lexp$.

Suppose now that a function  $g_0\in\lexp$ satisfies 
$\widecheck T(g_0)\in L^\infty$, in which case
$T\widecheck T(g_0)\in \lexp$.  To prove  that the identity
\begin{equation}\label{eq-5.1-2A}
T{\widecheck T}(g_0)=g_0-Q_{\lexp}(g_0)
\end{equation}
 holds in $\lexp$,  
 fix any $2<p<\infty$, in which case  $\lexp\subseteq L^p$. Moreover, the
identity
\begin{equation}\label{eq-5.1-2}
T\widecheck T(g_0)=g_0-Q(g_0)
\end{equation}
holds in $L^p$, \cite[Proposition 2.6]{OE}, where
$Q(f):=\big(\frac1\pi\int_{-1}^1\frac{f}{w}\big)\chi_{(-1,1)} \in L^p$ for $f\in L^p$. But, both
sides of the identity \eqref{eq-5.1-2} belong to $\lexp$ and the proof of \eqref{eq-5.1-2A} is complete.
\end{remark}


Recall that the function $1/w\in\logl=(\lexp)'$.
So, it defines a continuous linear functional $\varphi_{1/w}\colon\lexp\to\C$ by
$$
\varphi_{1/w}(g):=\Big\langle g,\frac{1}{w}\Big\rangle=
\int_{-1}^1\frac{g}{w},\quad g\in\lexp,
$$
which satisfies $\|\varphi_{1/w}\|_{\lexp^*}\le \|\frac1w\|_{\logl}$. Observe that
$$
\textnormal{Ker}(\varphi_{1/w})=\Big\{g\in\lexp: \int_{-1}^1(g/w)=0\Big\}
= \textnormal{Ker}(Q_{\lexp})
$$
is a proper, closed, infinite-dimensional subspace of $\lexp$.
Indeed, if $g\in\lexp$ is any odd function, then 
$\int_{-1}^1(g/w)=0$ and so $g\in \textnormal{Ker}(\varphi_{1/w})$.

Theorem \ref{t-5.1}(iv) implies that $T(\linf)\subseteq \textnormal{Ker}(\varphi_{1/w})$.
In particular, $\tinf$ is not surjective.


An important consequence of Theorem \ref{t-5.1} is the following
characterization of the range space of $\tinf$.

\begin{corollary}\label{c-5.2}  
A function $g\in \lexp$ belongs to the range space
$T(\linf)\subseteq\lexp$ if and only if it satisfies  both $\widecheck T(g)\in \linf$ and
$\varphi_{1/w}(g)=0$. That is, 
$$
T(\linf)=\Big\{g\in\textnormal{Ker}(\varphi_{1/w}):  \widecheck T(g)\in \linf\Big\}
=\Big\{g\in\textnormal{Ker}(Q_{\lexp}) :  \widecheck T(g)\in \linf\Big\}.
$$
\end{corollary}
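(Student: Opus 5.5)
The plan is to prove the two inclusions separately, using Theorem \ref{t-5.1} for the necessity and Remark \ref{r-5.2} for the sufficiency. The two set descriptions on the right coincide because $\textnormal{Ker}(\varphi_{1/w})=\textnormal{Ker}(Q_{\lexp})$, as already observed after Remark \ref{r-5.2}. It therefore suffices to show that $T(\linf)$ equals $\{g\in\lexp:\ \varphi_{1/w}(g)=0,\ \widecheck T(g)\in\linf\}$.

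\emph{Necessity.} Let $g\in T(\linf)$, say $g=T(f)$ with $f\in\linf$.
\begin{itemize}
\item By Proposition \ref{p-3.2}, $g\in\lexp$.
\item Theorem \ref{t-5.1}(ii) gives $\widecheck T(g)=\widecheck T T(f)=f\in\linf$.
\item Theorem \ref{t-5.1}(iv) gives $\varphi_{1/w}(g)=\int_{-1}^1 T(f)/w=0$.
\end{itemize}
So $g$ lies in the right-hand set. Before using $\widecheck T(g)$, note that it is defined for $g\in\lexp$: we have $g/w\in L^1$ by H\"older's inequality, since $1/w\in\logl=(\lexp)'$ (as in Remark \ref{r-5.2}).

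\emph{Sufficiency.} Let $g\in\lexp$ satisfy $\varphi_{1/w}(g)=0$ and $\widecheck T(g)\in\linf$. Set $f:=\widecheck T(g)\in\linf$. Identity \eqref{eq-5.1-2A} in Remark \ref{r-5.2} applies with $g_0=g$ and gives
$$
T(f)=T\widecheck T(g)=g-Q_{\lexp}(g)=g-\frac1\pi\varphi_{1/w}(g)\chi_{(-1,1)}=g .
$$
Hence $g=T(f)\in T(\linf)$.

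There is no real obstacle here. The only delicate point is the one Remark \ref{r-5.2} already handles: \eqref{eq-5.1-2A} must hold for a general $g_0\in\lexp$ with $\widecheck T(g_0)\in\linf$, not just for $g_0\in\linf$. It is obtained by passing through $L^p$ for some $2<p<\infty$ and invoking \cite[Proposition 2.6]{OE}. Apart from that, the argument only combines earlier statements.
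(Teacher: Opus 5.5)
Your proposal is correct and follows essentially the same route as the paper: sufficiency comes from the identity $T\widecheck T(g)=g-Q_{\lexp}(g)$ of Remark~\ref{r-5.2} together with $\varphi_{1/w}(g)=0$, and necessity comes from Theorem~\ref{t-5.1}(ii) and (iv). Your extra check that $\widecheck T(g)$ is well defined for $g\in\lexp$ (via $1/w\in\logl$ and H\"older's inequality) is a harmless addition that the paper leaves to Remark~\ref{r-5.2}.
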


\begin{proof}
Let $g\in \lexp$ satisfy $\widecheck T(g)\in \linf$ and 
$\int_{-1}^1(g/w)=0$, that is, $Q_{\lexp}(g)=0$. Remark 
\ref{r-5.2} implies that
$T(\widecheck T(g))=g$. Accordingly, $g\in T(\linf)$.

Conversely, let $g\in T(\linf)$. Then there exists $f\in \linf$ such that $g=T(f)\in\lexp$. 
From Theorem \ref{t-5.1}(ii) we have
$\widecheck T(g)=\widecheck TT(f)=f\in\linf$.
Moreover, Theorem \ref{t-5.1}(iv) implies that
$$
\int_{-1}^1\frac{g}{w}=\int_{-1}^1\frac{T(f)}{w}=0.
$$
\end{proof}


\begin{remark}\label{r-5.3}
We have seen that $\tinf(\linf)\subseteq \textnormal{Ker}(\varphi_{1/w})$. 
This inclusion is actually \textit{proper}. Indeed, let $X:=\logl$,
$Y:=L^1$ and $S\colon X\to Y$ denote the finite Hilbert
transform $T\colon\logl\to L^1$, briefly $\tlog$. Proposition
4.12(i) of \cite{COR-asnsp} implies that $S$ is \textit{not} surjective.
Moreover, $S^*=-\tinf$ is injective. According to
(a)$\iff$(c) in Theorem 4.15 of \cite{R}, with $S$ in place of $T$ there,
it follows that $\tinf(\linf)$ is \textit{not} norm-closed in $\lexp$.
Then $\tinf(\linf)$ cannot be norm-closed in the closed subspace
$\textnormal{Ker}(\varphi_{1/w})$ either. 
\end{remark}


We seek   necessary conditions on a function  $g$ to satisfy the condition 
$\widecheck{T}(g)\in\linf$ appearing in the characterization of the range space
$T(\linf)$ in Corollary \ref{c-5.2}. In this regard, the following result 
(cf.\ Theorem 2.3 in \cite{COR-mathnach})
is relevant. A further class of functions $g$ is given in Corollary \ref{c-5.7}.

\begin{proposition}\label{p-5.4}
Let $g$ satisfy a uniform $\lambda$-H\"older condition on $(-1,1)$ for 
$\lambda\in(0,1]$, that is, for some constant $K_g>0$ we have
$$
|g(s)-g(t)|\le K_g|s-t|^{\lambda},\quad s,t\in(-1,1).
$$
Then, with $B(\cdot,\cdot)$ denoting the Beta function, it is the case that
$$
w(t)\Big|\Big(T\Big(\frac{g}{w}\Big)\Big)(t)\Big|\le \frac{2}{\pi} K_g\, B(1/2,\lambda),
\quad t\in(-1,1).
$$
In particular, $\widecheck{T}(g)\in \linf$.
\end{proposition}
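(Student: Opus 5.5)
The plan is to subtract off the value $g(t)$ and use the classical identity $T(1/w)=0$. By Theorem 3.4 of \cite{COR-asnsp} (quoted in the proof of Proposition \ref{p-3.4}), $T(1/w)=0$ a.e. on $(-1,1)$. The direct computation $\mathrm{p.v.}\int_{-1}^1 \frac{dx}{\sqrt{1-x^2}\,(x-t)}=0$ for every $t\in(-1,1)$, using the substitution $x=\cos\theta$, shows it in fact holds pointwise. Since $g$ is H\"older, it is bounded, so $g/w\in L^1$.

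Fix $t\in(-1,1)$. For each $\varepsilon>0$ I split
\[
\frac1\pi\int_{|x-t|>\varepsilon}\frac{g(x)}{w(x)(x-t)}\,dx
=\frac1\pi\int_{|x-t|>\varepsilon}\frac{g(x)-g(t)}{w(x)(x-t)}\,dx
+\frac{g(t)}{\pi}\int_{|x-t|>\varepsilon}\frac{dx}{w(x)(x-t)} .
\]
As $\varepsilon\to0^+$, the second term tends to $g(t)\,T(1/w)(t)=0$. In the first term the integrand is bounded in modulus by $K_g|x-t|^{\lambda-1}/w(x)$. This is integrable on $(-1,1)$, because near $x=t$ the singularity is $|x-t|^{\lambda-1}$ with $\lambda>0$, and near $\pm1$ it is $(1\mp x)^{-1/2}$. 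Dominated convergence then lets the limit exist as an ordinary Lebesgue integral. Hence, at every $t$ where the principal value exists (in particular a.e.),
\[
w(t)\Big|T\Big(\frac gw\Big)(t)\Big|\le \frac{K_g}{\pi}\,w(t)\int_{-1}^1\frac{|x-t|^{\lambda-1}}{\sqrt{1-x^2}}\,dx .
\]

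The main step, and the only real obstacle, is the uniform bound
\[
w(t)\int_{-1}^1\frac{|x-t|^{\lambda-1}}{\sqrt{1-x^2}}\,dx\le 2B(1/2,\lambda).
\]
Write $w(t)=\sqrt{(1-t)(1+t)}$ and split the integral into $(t,1)$ and $(-1,t)$. On $(t,1)$ use $1+x\ge 1+t$, so $\sqrt{1-x^2}\ge\sqrt{1+t}\,\sqrt{1-x}$. The substitution $x=t+(1-t)u$ then gives
\[
\int_t^1\frac{(x-t)^{\lambda-1}}{\sqrt{1-x^2}}\,dx\le\frac{(1-t)^{\lambda-1/2}}{\sqrt{1+t}}\int_0^1u^{\lambda-1}(1-u)^{-1/2}\,du=\frac{(1-t)^{\lambda-1/2}}{\sqrt{1+t}}\,B(\lambda,1/2).
\]
Multiplying by $w(t)$ yields $(1-t)^{\lambda}B(1/2,\lambda)\le 2^\lambda B(1/2,\lambda)$.

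This gives an extra factor $2^\lambda$, so the estimate must be sharpened to reach the stated constant. First I would try the substitution $x=\cos\theta$, $t=\cos\phi$, where the weight becomes $d\theta$ and $w(t)=\sin\phi$. Failing that, I would redo the estimate without discarding $1+x$ on the whole interval. The symmetric term on $(-1,t)$ is handled identically. Whatever constant of the form $C_\lambda B(1/2,\lambda)$ emerges, the conclusion $\widecheck T(g)=-wT(g/w)\in\linf$ follows at once, and that is the part needed later.
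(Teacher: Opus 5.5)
Your reduction is correct and is the natural route: subtracting $g(t)$, using $T(1/w)=0$ pointwise, applying dominated convergence to the H\"older part, and substituting $x=t+(1-t)u$ on $(t,1)$ (and the mirror substitution on $(-1,t)$) are all fine. Together they already give $\widecheck T(g)\in\linf$. The paper gives no proof of this proposition and only points to Theorem 2.3 of \cite{COR-mathnach}.

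The gap is in your last step. You bound each half separately by $2^{\lambda}B(1/2,\lambda)$, which yields only $\frac{2^{1+\lambda}}{\pi}K_g B(1/2,\lambda)$. You then leave the stated constant open (``whatever constant emerges''), so the displayed inequality of the proposition is not proved.

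No sharpening of the half-estimates and no trigonometric substitution is needed. You just have to keep the exact half-bounds when you add them. Your own computation gives
\[
w(t)\int_{-1}^1\frac{|x-t|^{\lambda-1}}{\sqrt{1-x^2}}\,dx\le\bigl((1-t)^{\lambda}+(1+t)^{\lambda}\bigr)B(1/2,\lambda).
\]
Since $s\mapsto s^{\lambda}$ is concave on $[0,\infty)$ for $0<\lambda\le 1$,
\[
(1-t)^{\lambda}+(1+t)^{\lambda}\le 2\Bigl(\frac{(1-t)+(1+t)}{2}\Bigr)^{\lambda}=2,\quad t\in(-1,1).
\]
Multiplying by $K_g/\pi$ gives exactly $\frac{2}{\pi}K_g B(1/2,\lambda)$.

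The separate bound $(1-t)^{\lambda}\le 2^{\lambda}$ is only approached as $t\to-1$, where the other half $(1+t)^{\lambda}$ vanishes. That is why estimating the two halves independently loses the factor $2^{\lambda}$.
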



The range space $T(\linf)$ has been identified in  Corollary \ref{c-5.2}. 
The next result presents various facts aimed 
at a better understanding  of  $T(\linf)$.  
For the definition of the weak-$\ast$ topology $\sigma(\lexp,\logl)$ on
$\lexp=(\logl)^*$ see \cite[Section 3.14]{R}, for example.

\begin{proposition}\label{p-5.5} 
The following assertions hold for  $\tinf$ and $\widecheck{T}$.
\begin{itemize}
\item[(i)] $T(\linf)$ is a proper, weak-$\ast$ dense linear subspace of 
$\textnormal{Ker}(\varphi_{1/w})$.
\item[(ii)]  $\widecheck{T}(\linf)\not\subseteq \linf$.
\item[(iii)]  $(I_{\linf}-Q)(\linf)\not\subseteq T(\linf)\not\subseteq \linf$, with $Q$ 
as defined in \eqref{eq-5.1}.
\item[(iv)] $\linf\not\subseteq \widecheck{T}(\linf)$. 
\item[(v)] $\linf\not\subseteq \widecheck{T}(\lexp)\not\subseteq \lexp$.
\item[(vi)]   $\lexp\not\subseteq \widecheck{T}(\lexp)$.
\end{itemize}
\end{proposition}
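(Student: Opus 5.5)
The plan is to treat the six items separately. Throughout I use three ingredients: Theorem~\ref{t-5.1}; the identity $T\widecheck T(f)=f-Q(f)$, which holds in every $L^p$ with $2<p<\infty$ (\cite[Proposition 2.6]{OE}) and hence for every $f\in\lexp$; and the local behaviour of $T$ at an interior jump or logarithmic singularity. Concretely, if $h$ has a jump at $0$, then $T(h)$ behaves like $c\log(1/|x|)$ near $0$ with $c\neq 0$. If $h(x)=\operatorname{sgn}(x)\log(1/|x|)$ near $0$, then $T(h)$ behaves like $c(\log(1/|x|))^2$. The function $(\log(1/|x|))^2$ lies in $L^{2}_{\rm exp}$ but not in $\lexp$, and not in $L^\infty$.

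For (i), properness is exactly Remark~\ref{r-5.3}. For weak-$\ast$ density I will compute the pre-annihilator of $T(\linf)$ in $\logl$. Let $h\in\logl$ satisfy $\int hT(f)=0$ for all $f\in\linf$. Proposition~\ref{p-3.1} gives $\int fT(h)=0$ for all $f\in\linf$, so $T(h)=0$. By \cite[Theorem 3.4]{COR-asnsp} this forces $h=c/w$. Hence the pre-annihilator is $\textnormal{span}(1/w)$. By the bipolar theorem for $\sigma(\lexp,\logl)$, the weak-$\ast$ closure of $T(\linf)$ is then $\{1/w\}^\perp=\textnormal{Ker}(\varphi_{1/w})$.

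Items (ii), (iii), (iv) and (vi) all rest on these local singularity computations.
\begin{itemize}
\item[(ii)] Take $f=\operatorname{sgn}(x)\chi_{(-1,1)}$. Since $w\approx1$ near $0$ and $f/w$ jumps at $0$, the function $\widecheck T(f)=-wT(f/w)$ has a logarithmic singularity at $0$. So $\widecheck T(f)\notin\linf$.
\item[(iii)] Use the same odd $f$. Then $Q(f)=0$, so $f\in(I_{\linf}-Q)(\linf)$. If $f=T(h)$ with $h\in\linf$, Theorem~\ref{t-5.1}(ii) would give $\widecheck T(f)=h\in\linf$, contradicting (ii). For the second non-inclusion, note $T(\chi_{(-1,1)})=\frac1\pi\log\frac{1-x}{1+x}\notin\linf$.
\item[(iv)] If $\chi_{(-1,1)}=\widecheck T(f)$ with $f\in\linf$, Theorem~\ref{t-5.1}(iii) gives $T(\chi_{(-1,1)})=f-Q(f)\in\linf$, which is false.
\item[(vi)] Take $g(x)=\operatorname{sgn}(x)\log(1/|x|)\in\lexp$. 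If $g=\widecheck T(f)$ with $f\in\lexp$, the identity above gives $T(g)=f-Q_{\lexp}(f)\in\lexp$. But $T(g)$ behaves like $(\log(1/|x|))^2$ near $0$, so it is not in $\lexp$, a contradiction.
\end{itemize}

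For (v), Theorem~\ref{t-5.1}(ii) shows $f=\widecheck T(T(f))$ with $T(f)\in\lexp$ for every $f\in\linf$, so $\linf\subseteq\widecheck T(\lexp)$ genuinely holds. I therefore read the first symbol in (v) as strict inclusion, $\linf\subsetneqq\widecheck T(\lexp)$, and would prove it in that form. For the second part, take $g=\operatorname{sgn}(x)\log(1/|x|)\in\lexp$. Since $w$ is smooth and nonvanishing near $0$, $\widecheck T(g)$ has a $(\log(1/|x|))^2$ singularity at $0$. Hence $\widecheck T(g)\notin\lexp$, and a fortiori $\widecheck T(g)\notin\linf$. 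This gives both $\widecheck T(\lexp)\not\subseteq\lexp$ and the strictness of $\linf\subsetneqq\widecheck T(\lexp)$.

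The main obstacle is making the local singularity claims rigorous. I would split each function as (function supported near $0$) plus (remainder smooth near $0$). The smooth part contributes a bounded term near $0$, for example via Proposition~\ref{p-5.4}. The part near $0$ I would compute explicitly. For instance, $\frac1\pi\,\mathrm{p.v.}\int_{-\delta}^{\delta}\frac{\operatorname{sgn}(x)\log(1/|x|)}{x-t}\,dx$ can be evaluated via the substitution $x=|t|u$, and yields $-\frac1\pi(\log(1/|t|))^2+O(\log(1/|t|))$.
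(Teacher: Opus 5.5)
Your proposal is correct, and on (v) it is more accurate than the statement. By Theorem~\ref{t-5.1}(ii), every $f\in\linf$ equals $\widecheck T(T(f))$ with $T(f)\in\lexp$. Hence $\linf\subseteq\widecheck T(\lexp)$, and the first non-inclusion in (v) is false as written. The paper's proof of (v) only establishes $\widecheck T(\lexp)\not\subseteq\lexp$ and never addresses the first half. So your reading $\linf\subsetneqq\widecheck T(\lexp)$, which you correctly deduce from the second half, is the right repair.

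Items (i), (iii), (iv) follow the paper. In (i) you use the same annihilator argument; the paper quotes Rudin's result that the weak-$\ast$ closure of $S^*(Y^*)$ is $\textnormal{Ker}(S)^\perp$, with $S=\tlog$ and $S^*=-\tinf$. In (iii) and (iv) you run the paper's contradiction arguments on single witnesses, which spares you the step $\widecheck T(Q(g))=0$.

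The genuine difference is in (ii), (v), (vi):
\begin{itemize}
\item[(ii)] The paper quotes the closed formula for $\widecheck T(\chi_{[0,1)})$ from \cite{OE}. This is equivalent to your $\operatorname{sgn}$, since $\widecheck T(\chi_{(-1,1)})=-wT(1/w)=0$.
\item[(v)] The paper argues by duality that $\widecheck T(\lexp)\subseteq\lexp$ would force $\widehat T(\logl)\subseteq\logl$, contradicting \cite[Proposition 4.12(iv)]{COR-asnsp}.
\item[(vi)] The paper shows that $\lexp\subseteq\widecheck T(\lexp)$ would give $T(\lexp)\subseteq\lexp$, impossible because $\lexp$ has trivial Boyd indices.
\end{itemize}
You replace both abstract arguments in (v) and (vi) by one explicit function $g=\operatorname{sgn}(x)\log(1/|x|)$ and a local asymptotic. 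The paper's route needs no computation but leans on earlier nontrivial results. Yours is self-contained and yields explicit witnesses, at the price of doing the localization carefully. Their $(\log)^2$ growth even shows that, at $\alpha=1$, the target $L^{2}_{\textnormal{exp}}$ in Proposition~\ref{p-3.2} and Corollary~\ref{c-4.3} cannot be lowered to any $L^{\beta}_{\textnormal{exp}}$ with $\beta<2$.

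Two things to fix when you write it out.

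First, the sign. With $R=\delta/|t|$ and $0<|t|<\delta$, the substitution $x=|t|u$ (plus evenness of $T$ on odd functions) gives
\[
\pi\,\mathrm{p.v.}\!\int_{-\delta}^{\delta}\frac{\operatorname{sgn}(x)\log(1/|x|)}{x-t}\,dx=\log\tfrac{1}{|t|}\big[\log(R-1)+\log(R+1)\big]-\int_0^R\frac{\log u}{u-1}\,du-\int_0^R\frac{\log u}{u+1}\,du .
\]
This equals $+(\log(1/|t|))^2+O(1)$. The minus sign you wrote belongs to $\widecheck T(g)=-wT(g/w)$, not to $T$. Only the nonvanishing of the coefficient matters, so nothing breaks.

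Second, Proposition~\ref{p-5.4} needs a \emph{global} H\"older condition on $(-1,1)$. Remainders such as $\operatorname{sgn}(x)/w(x)-\operatorname{sgn}(x)\chi_{(-\delta,\delta)}(x)$ do not satisfy it. Use instead the elementary local fact: if $r\in L^1$ is Lipschitz on $(-\delta,\delta)$, then $T(r)$ is bounded on $(-\delta/2,\delta/2)$. This follows from
\[
\pi T(r)(t)=\int_{|x-t|<\delta/2}\frac{r(x)-r(t)}{x-t}\,dx+\int_{|x-t|\ge\delta/2}\frac{r(x)}{x-t}\,dx .
\]
Alternatively, insert a smooth cutoff before invoking Proposition~\ref{p-5.4}.
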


\begin{proof}
(i) It was shown in Remark \ref{r-5.3} that $T(\linf)\subsetneqq \textnormal{Ker}(\varphi_{1/w})$.
Recall,  if $X$ and $Y$ are Banach spaces and $S\colon X\to Y$ is a bounded linear operator
with adjoint operator $S^*\colon Y^*\to X^*$,
then the weak-$\ast$ closure of $S^*(Y^*)$ equals $\textnormal{Ker}(S)^\perp$, 
\cite[Theorems 4.7 and 4.12]{R}.
We apply this when $S$ is the operator  $\tlog$, 
in which case $(\tlog)^*=-\tinf$ with $\tinf\colon\linf\to\lexp$.
Accordingly, the weak-$\ast$ closure of $\tinf(\linf)$ is equal to $\textnormal{Ker}(\tlog)^\perp$.
Since $\textnormal{Ker}(\tlog)=\text{span}(1/w)$, 
\cite[Theorem 3.4]{COR-asnsp}, it follows that 
the weak-$\ast$ closure of $\tinf(\linf)$  equals $\textnormal{Ker}(\varphi_{1/w})$.

(ii) Direct computation shows, for $x\in(-1,1)$, that  
$$
\widecheck{T}(\chi_{[0,1)})(x)=
-w(x)T\Big(\frac{\chi_{[0,1)}}{w}\Big)(x)=\frac{-1}{\pi} \log\left|
\frac{(1-x)^{1/2}(1+x)^{-1/2}+1}{(1-x)^{1/2}(1+x)^{-1/2}-1}\right|;
$$
see the proof of \cite[Lemma 4.3]{OE}. This function is plainly unbounded on $(-1,1)$
whereas $\chi_{[0,1)}\in L^\infty$.

(iii) That $T(\linf)\not\subseteq \linf$ is clear as
$T(\chi_{(-1,1)})(x)=(1/\pi)\log\big((1-x)/(1+x)\big)$, for $x\in(-1,1)$.

Assume, by way of contradiction, that 
$(I_{\linf}-Q)(\linf)\subseteq T(\linf)$. We  show that
this implies $\widecheck{T}(\linf)\subseteq \linf$. So, fix $g\in\linf$.
Then $(I_{\linf}-Q)(g)=T(f)$, for some $f\in\linf$ and  so
$\widecheck{T}(g)=\widecheck{T}(I_{\linf}-Q)(g)+\widecheck{T}(Q(g))$.
But, $T(1/w)=0$ implies that 
$$
\widecheck{T}(Q(g))= \left(\frac{-1}{\pi}\int_{-1}^1\frac{g}{w}\right)
wT\Big(\frac1w\Big)=0.
$$
Hence, $\widecheck{T}(g)=\widecheck{T}(I_{\linf}-Q)(g)=\widecheck{T}T(f)$
which, by Theorem \ref{t-5.1}(ii), equals $f\in \linf$.
That is, $\widecheck{T}(\linf)\subseteq \linf$, which contradicts (ii).

(iv) Assume, by way of contradiction, that  $\linf\subseteq \widecheck{T}(\linf)$.
Then $T(\linf)\subseteq T\widecheck{T}(\linf)
=(I_{\linf}-Q)(\linf)\subseteq\linf$ by Theorem \ref{t-5.1}(iii).
Hence, $T(\linf)\subseteq \linf$, which is not true by part (iii).

(v) Assuming the contrary, that is, $\widecheck{T}(\lexp)\subseteq \lexp$ we will arrive
at the containment  $\widehat T(\logl)\subseteq\logl$, which does
not hold as shown in  \cite[Proposition 4.12(iv)]{COR-asnsp}.

Let $f\in\logl$ and select any $g\in\lexp$. Since $1/w\in L^{2,\infty}\subseteq L^{3/2}$
and $g\in\lexp\subseteq L^6$, it follows that $g/w\in L^{6/5}\subseteq \logl$,
as $1/(3/2)+1/6=1/(6/5)$. 
By assumption, for each $A\in\mathcal{B}$, the function  $\widecheck T(g\chi_A)\in\lexp$ and 
so H\"older's inequality yields that
$$
(wf)T\Big(\frac{g}{w}\chi_A\Big)=-f\widecheck{T}(g\chi_A)\in L^1.
$$ 
Since $A\in\mathcal{B}$ is arbitrary with  $wf\in L^1$ and $g/w\in  \logl$, 
Proposition \ref{p-3.1} implies
that $(g/w)T(wf)\in L^1$ and hence, that
$$
g\widehat T(f)=-\frac{g}{w}T(wf)\in L^1.
$$
Accordingly, $g\widehat T(f)\in L^1$ for every $g\in\lexp$ and so, 
$\widehat T(f)\in(\lexp)'=\logl$. Hence,
$\widehat T(\logl)\subseteq\logl$ which was noted to be false.

(vi)   
Assume that $\lexp\subseteq \widecheck{T}(\lexp)$, in which case 
$T(\lexp)\subseteq T\widecheck{T}(\lexp)$. Fix $2<p<\infty$.
As indicated in \eqref{eq-5.1-2}, the identity 
$T\widecheck{T}(h)=h-Q(h)$ holds in  $L^p$ with each of the individual terms
$h, Q(h)$ and $T\widecheck{T}(h)$ belonging to $L^p$. If $g\in\lexp$ then 
$g\in L^p$ and so $T\widecheck{T}(g)=g-Q(g)$. From the definition of
$Q$ it is clear that  $Q(g)\in L^\infty\subseteq\lexp$, and so
$g-Q(g)\in \lexp$. Hence, $T\widecheck{T}(g)\in \lexp$.
It follows that $T(\lexp)\subseteq\lexp$, that is, $T\colon\lexp\to\lexp$
boundedly (cf.\ Lemma 4.1). This is impossible as
$\lexp$ has trivial Boyd indices.
\end{proof}


Recall from \eqref{airfoil} that the airfoil equation is $T(f)=g$, that is, 
\begin{equation*}
\frac{1}{\pi}\, \mathrm{ p.v.}  \int_{-1}^{1}\frac{f(x)}{x-t}\,dx=g(t),
\quad \mathrm{a.e. }\; t\in(-1,1).
\end{equation*}
Given $g$, an  \textit{inversion formula} is needed to solve this equation for $f$. 
Theorem \ref{t-5.1} and Corollary \ref{c-5.2}  allow us to  obtain such an inversion formula  for 
$\tinf\colon\linf\to \lexp$.


\begin{theorem}\label{t-5.6}
Let $g$ belong to the range of  $\tinf\colon\linf\to \lexp$.  
The unique solution  $f\in \linf$ of the airfoil equation \eqref{airfoil} is 
$$
f=\widecheck{T}(g)=-wT\Big(\frac{g}{w}\Big).
$$
\end{theorem}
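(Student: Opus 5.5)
The argument is essentially a direct consequence of Theorem \ref{t-5.1}, and I would organise it into existence, the formula, and uniqueness.

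\emph{Existence and formula.} Since $g$ lies in the range of $\tinf$, there is some $f_0\in\linf$ with $T(f_0)=g$, so $f_0$ solves the airfoil equation \eqref{airfoil}. Applying Theorem \ref{t-5.1}(ii) to $f_0$ gives
\[
\widecheck T(g)=\widecheck T T(f_0)=f_0 .
\]
Thus the function $\widecheck T(g)=-wT(g/w)$ is well defined. This uses $g/w\in L^1$, which holds because $g\in\lexp$ and $1/w\in\logl=(\lexp)'$. Moreover $\widecheck T(g)$ belongs to $\linf$ and is a solution.

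\emph{Uniqueness.} Suppose $f_1,f_2\in\linf$ both satisfy $T(f_1)=T(f_2)=g$. Then $T(f_1-f_2)=0$ with $f_1-f_2\in\linf$. Injectivity of $\tinf$ (Theorem \ref{t-5.1}(i), i.e.\ Proposition \ref{p-3.4}) then forces $f_1=f_2$. Alternatively, both $f_1$ and $f_2$ equal $\widecheck T(g)$ by the computation above. Hence the solution in $\linf$ is unique and equals $\widecheck T(g)$.

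\emph{Remarks on the underlying input.} No step is a genuine obstacle here: all the analytic content sits in Theorem \ref{t-5.1}(ii), which was obtained through the $L^p$ theory for $2<p<\infty$. If one also wants to record the full inversion picture, Corollary \ref{c-5.2} identifies exactly which $g$ are admissible, namely $\varphi_{1/w}(g)=0$ and $\widecheck T(g)\in\linf$. Remark \ref{r-5.2} then gives $T\widecheck T(g)=g$ directly for such $g$, which confirms consistency. The one point requiring care is the claim that $\widecheck T(g)$ is meaningful for $g$ only assumed in $\lexp$, and this is exactly the integrability of $g/w$ noted above.
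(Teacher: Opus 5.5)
Your argument is correct, but it rests on a different identity from the paper's. The paper sets $f:=\widecheck{T}(g)$, obtains $f\in\linf$ from Corollary~\ref{c-5.2}, and checks the equation by applying $T$. Remark~\ref{r-5.2} gives $T\widecheck{T}(g)=g-Q_{\lexp}(g)$, and Theorem~\ref{t-5.1}(iv), applied to a preimage of $g$, gives $Q_{\lexp}(g)=0$; uniqueness is then quoted from Proposition~\ref{p-3.4}.

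You use only the left-inverse identity $\widecheck{T}T(f)=f$ on $\linf$ from Theorem~\ref{t-5.1}(ii). This identifies $\widecheck{T}(g)$ with the given preimage $f_0$, so $T(\widecheck{T}(g))=T(f_0)=g$ follows without appeal to $T\widecheck{T}=I-Q_{\lexp}$ or to the vanishing of $\varphi_{1/w}$ on the range. Avoiding the former also avoids the separate $L^p$ identity underlying Remark~\ref{r-5.2}. As you observe, the same identity gives uniqueness on its own, since any solution $f_1\in\linf$ satisfies $f_1=\widecheck{T}T(f_1)=\widecheck{T}(g)$. Your route is the more economical one; in fact the proof of Corollary~\ref{c-5.2} already contains your computation.

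The paper's route has a different merit. Its verification step $T\widecheck{T}(g)=g-Q_{\lexp}(g)=g$ needs only the two intrinsic conditions $\widecheck{T}(g)\in\linf$ and $\varphi_{1/w}(g)=0$ from Corollary~\ref{c-5.2}. It therefore shows $\widecheck{T}$ acting as an inverse on data $g$ checked against those conditions, rather than on data whose preimage is known in advance. It also makes explicit the role of the compatibility condition $\varphi_{1/w}(g)=0$ and of the rank-one defect $Q_{\lexp}$.
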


\begin{proof}
Define $f:=\widecheck T(g)$ and note that $f\in L^\infty$ by Corollary \ref{c-5.2}.
Remark \ref{r-5.2} implies that
$T(f)=T(\widecheck T(g))=g-Q_{\lexp}(g).$
But, $g=\tinf(h)$ for some $h\in L^\infty$ and so Theorem \ref{t-5.1}(iv) implies that
$$
Q_{\lexp}(g)= \left(\frac{1}{\pi}\int_{-1}^1\frac{T(h)}{w}\right)\chi_{(-1,1)}=0.
$$
Hence, $T(f)=g$. That is, $f=\widecheck T(g)$ is the unique solution 
(cf. Proposition \ref{p-3.4}) of the airfoil equation.
\end{proof}


According to Proposition \ref{p-5.5}(i) we can interpret the operator 
$\tinf$ as taking its values in the proper, closed subspace 
$\textnormal{Ker}(\varphi_{1/w})$ of $\lexp$.
Theorem \ref{t-5.6} implies that $T\colon L^\infty\to \textnormal{Ker}(\varphi_{1/w})$
is a vector space isomorphism onto its range 
$T(L^\infty)\subsetneqq \textnormal{Ker}(\varphi_{1/w})$
whose inverse map is the vector space isomorphism $\widecheck T\colon 
T(L^\infty)\to L^\infty$.

\begin{corollary}\label{c-5.7}
The linear map $\widecheck T\colon T(L^\infty)\to L^\infty$ is a closed
operator defined on the proper linear subspace $T(L^\infty)$ of $\textnormal{Ker}(\varphi_{1/w})$.
\end{corollary}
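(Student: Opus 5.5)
We have to show that $\widecheck T\colon T(L^\infty)\to L^\infty$, viewed as an operator from the normed space $\textnormal{Ker}(\varphi_{1/w})\subseteq\lexp$ into $\linf$ with domain $T(\linf)$, has a closed graph. That $T(\linf)$ is a proper linear subspace of $\textnormal{Ker}(\varphi_{1/w})$ is already known from Remark \ref{r-5.3} (equivalently Proposition \ref{p-5.5}(i)). By Theorem \ref{t-5.6}, $\widecheck T$ maps $T(\linf)$ into $\linf$ and is the inverse of $\tinf$ there, so only closedness remains.

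The closedness comes from a general principle: the inverse of an injective bounded operator is closed. Take $g_n\in T(\linf)$ with $g_n\to g$ in $\lexp$, with $g\in\textnormal{Ker}(\varphi_{1/w})$, and $\widecheck T(g_n)\to f$ in $\linf$. We must show $g\in T(\linf)$ and $\widecheck T(g)=f$.

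Set $f_n:=\widecheck T(g_n)\in\linf$. By Theorem \ref{t-5.6}, $T(f_n)=g_n$. Since $\tinf\colon\linf\to\lexp$ is bounded (Proposition \ref{p-3.2}), $f_n\to f$ in $\linf$ gives $T(f_n)\to T(f)$ in $\lexp$. Uniqueness of norm limits then yields $g=T(f)$, so $g\in T(\linf)$. Theorem \ref{t-5.1}(ii) gives $\widecheck T(g)=\widecheck T T(f)=f$, which proves the graph is closed.

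No step is a real obstacle. The one point needing care is the framework: closedness must be read with respect to the $\lexp$-norm on the domain and the $\linf$-norm on the target. The graph of $\widecheck T$ is the image of the graph of $\tinf$ under the coordinate swap $(f,g)\mapsto(g,f)$. The graph of $\tinf$ is closed because $\tinf$ is bounded, and the swap preserves closedness. We could also remark that $\widecheck T$ is not bounded on $T(\linf)$. If it were, then $\tinf$ would be bounded below, so $T(\linf)$ would be closed, contradicting Remark \ref{r-5.3}. Hence "closed" is the best one can say.
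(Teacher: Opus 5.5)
Your proof is correct and follows essentially the same route as the paper: you write $g_n=T(f_n)$ with $f_n=\widecheck T(g_n)$ via Theorem \ref{t-5.6}, use the boundedness of $\tinf$ to get $g=T(f)$, and then identify $\widecheck T(g)=f$ (you cite Theorem \ref{t-5.1}(ii), the paper cites the uniqueness in Theorem \ref{t-5.6}, which amounts to the same thing). Your extra observation is also correct: $\widecheck T$ cannot be bounded on $T(\linf)$, since otherwise $\tinf$ would be bounded below and so have closed range, contrary to Remark \ref{r-5.3}.
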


\begin{proof}
Consider any sequence $(g_n)_{n=1}^\infty\subseteq T(L^\infty)$ satisfying $g_n\to g$ in 
$\textnormal{Ker}(\varphi_{1/w})$ and $\widecheck T(g_n)\to f$ in $L^\infty$. 
Let $(h_n)_{n=1}^\infty\subseteq L^\infty$ satisfy $g_n=T(h_n)$, for each
$n\in\N$, in which case $T(h_n)\to g$ in $\textnormal{Ker}(\varphi_{1/w})$.
On the other hand, $g_n=T(h_n)$ implies, via Theorem \ref{t-5.6}, that
$h_n=\widecheck T(g_n)$, for each $n\in\N$, and so $h_n\to f$ in $L^\infty$.
By the continuity of $\tinf\colon L^\infty\to \textnormal{Ker}(\varphi_{1/w})$ 
it follows that $T(h_n)\to T(f)$ in $\textnormal{Ker}(\varphi_{1/w})$
and hence, that $g=T(f)$ with $f\in L^\infty$. That is, $g\in T(L^\infty)$. By the uniqueness
statement in Theorem \ref{t-5.6} we can conclude that $f=\widecheck T(g)$.
Accordingly, $\widecheck T\colon T(L^\infty)\to L^\infty$ is a closed
linear operator defined in $\textnormal{Ker}(\varphi_{1/w})$.
\end{proof}

\begin{remark}\label{r-5.9}
Observe that $\textnormal{Ker}(\varphi_{1/w})$ 
is a \textit{proper}, closed subspace of $\lexp$ 
which contains $T(L^\infty)$. This does not contradict Proposition
\ref{p-3.5} as $\textnormal{Ker}(\varphi_{1/w})$ is not an order ideal in $L^0$ and hence, is not a B.f.s.
Indeed, $f=\chi_{[0,1)}-\chi_{(-1,0)}$ is an odd function
and so $f\in \textnormal{Ker}(\varphi_{1/w})$. However, $|f|=\chi_{(-1,1)}$ satisfies 
$\varphi_{1/w}(|f|)=\pi$ and so $|f|\not\in\textnormal{Ker}(\varphi_{1/w})$.
\end{remark}


\section{The optimal domain for $T$ taking values in $\lexp$}
\label{S6}


In this section we  prove that  the operator $\tinf\colon\linf \to \lexp$  is optimally defined. 
The approach used for this is  first to explicitly determine
the  \textit{optimal domain} of $T$ with values in $\lexp$, denoted by $[T,\lexp]$,
and then to show that it is isomorphic to $\linf$. This is the same approach  that was used
to show that $T\colon X\to X$ is optimally defined whenever $X$
is a r.i.\ space with non-trivial Boyd indices (cf.\ \cite[\S3]{COR-mh}) and 
also used to show that 
$T\colon\logl \to L^1$ is optimally defined,  \cite[\S5]{COR-asnsp}.
This strategy required an in-depth study of the
optimal domain (cf.\ \cite[Theorem 5.3]{COR-ampa}, \cite[Theorem]{COR-mh})
that we will also use here.

Define the  space of functions
\begin{equation}\label{eq-6.1}
[T, \lexp] : = \Big\{f \in L^1: T(h) \in \lexp \text{ for all } |h|  \le |f|\Big\},
\end{equation}
referred to  as the  \textit{optimal domain} of $T$ with values in $\lexp$.
Proposition \ref{p-3.4} implies that the inclusion $\linf \subseteq [T, \lexp] $ is valid.
It will be shown that the opposite containment  $[T, \lexp]\subseteq  \linf $ also  holds.


We begin by investigating the structure and exhibiting various properties of $[T, \lexp]$.

\begin{proposition}\label{p-6.1} 
\mbox{}
\begin{itemize}
\item[(i)]
Let  $f \in L^1$.  The following conditions are equivalent.
\begin{itemize}
\item[(a)]  $f\in[T,\lexp]$.
\item[(b)] $\displaystyle \sup_{|h|\le|f|}\|T(h)\|_{\lexp}<\infty.$
\item[(c)] $T(f\chi_A)\in \lexp$ for every $A\in\mathcal{B}$.
\item[(d)] $\displaystyle \sup_{A\in\mathcal{B}}\|T(f\chi_A)\|_{\lexp}<\infty.$
\item[(e)] $T(\theta f)\in \lexp$ for every $\theta\in L^\infty$ with $|\theta|=1$ a.e.
\item[(f)] $\displaystyle \sup_{|\theta|=1}\|T(\theta f)\|_{\lexp}<\infty.$
\item[(g)] $fT(g) \in L^1$ for every $ g \in \logl$. 
\end{itemize}
\item[(ii)]
Every function $f\in[T,\lexp]$ satisfies the inequalities
\begin{equation*}
\sup_{A\in\mathcal{B}}\big\|T(\chi_A f)\big\|_{\lexp}
\le
\sup_{|\theta|=1}\big\|T(\theta f)\big\|_{\lexp}
\le
\sup_{|h|\le|f|}\big\|T(h)\big\|_{\lexp}
\le
4 \sup_{A\in\mathcal{B}}\big\|T(\chi_A f)\big\|_{\lexp}.
\end{equation*}
\end{itemize}
\end{proposition}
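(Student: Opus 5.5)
I will prove (ii) first, since its inequalities yield most of the equivalences in (i) directly. The first two inequalities in (ii) are elementary: for $A\in\mathcal{B}$ we have $\chi_A f=\tfrac12(\theta_1 f+\theta_2 f)$ with $\theta_1:=1$ and $\theta_2:=\chi_A-\chi_{(-1,1)\setminus A}$, both unimodular. This gives $\|T(\chi_Af)\|\le\sup_{|\theta|=1}\|T(\theta f)\|$. (Alternatively, rather than introduce a factor, note that $\chi_A$ is itself dominated, so the middle supremum can be compared directly.) The second inequality is trivial, since $|\theta f|=|f|$.

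For the last inequality, fix $|h|\le|f|$ and write $h=\psi f$ with $\psi\in L^\infty$, $|\psi|\le1$. Decompose $\psi=\psi_1-\psi_2+i(\psi_3-\psi_4)$ with $0\le\psi_j\le1$. Each real $0\le\psi_j\le1$ is a uniform limit of simple functions $\sum_k c_k\chi_{A_k}$ with $c_k\ge0$, $\sum c_k\le1$ and nested sets $A_k$ (layer-cake decomposition). By linearity, $\|T(\sum_k c_k\chi_{A_k}f)\|_{\lexp}\le\sup_A\|T(\chi_A f)\|_{\lexp}$. To pass to the limit, let $s_n\to\psi_j$ uniformly. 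Then $s_nf\to\psi_jf$ in $L^1$, so by Kolmogorov's theorem $T(s_nf)\to T(\psi_jf)$ in measure, and hence a.e.\ along a subsequence. The Fatou property of $\lexp$ (applied via $\liminf$ of $|T(s_nf)|$) then gives $\|T(\psi_jf)\|_{\lexp}\le\sup_A\|T(\chi_Af)\|_{\lexp}$. Summing the four pieces yields the factor $4$. This Fatou step is the delicate point, and it is also where the non-separability of $\lexp$ is circumvented: $\lexp$ lacks $\sigma$-order continuity but does have the Fatou property, which is all that is needed.

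For (i), (a)$\Rightarrow$(c) and (a)$\Rightarrow$(e) are immediate, and (b)$\Rightarrow$(d),(f)$\Rightarrow$(a) are immediate as well. (d)$\Rightarrow$(b) is the last inequality of (ii), and (f)$\Rightarrow$(d) is the first. It therefore remains to show (c)$\Rightarrow$(d), (e)$\Rightarrow$(f), and the equivalence with (g).

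For (c)$\Rightarrow$(d), I will use a closed graph or uniform boundedness argument. The map $A\mapsto T(\chi_Af)$ is a finitely additive $\lexp$-valued set function. Assume (c) and consider the linear map $S\colon L^\infty\to\lexp$ given by $S(\psi):=T(\psi f)$. It is defined on simple $\psi$ by (c), and it extends to all of $L^\infty$ via the Fatou argument above, provided one first checks that the bound holds. It is cleaner to prove (c)$\Rightarrow$(g)$\Rightarrow$(d) instead. Given $g\in\logl$, the function $g\chi_A$ lies in $\logl$ and $T(f\chi_A)\in\lexp=(\logl)'$, so by Proposition~\ref{p-3.1} with the roles swapped we get $g\,T(f)\in L^1$ provided $gT(f\chi_A)\in L^1$ for all $A$. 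Indeed, by Proposition~\ref{p-3.1} applied to the pair $(f,g)$ and the hypothesis $fT(g\chi_A)$, one needs instead $f T(g\chi_A)\in L^1$. So I will establish (g) in the form $fT(g)\in L^1$ via duality. For $g\in\logl$, Proposition~\ref{p-3.1} gives $\int f\chi_A T(g)=-\int g\,T(f\chi_A)$, which is finite for every $A$. Choosing $A=\{\mathrm{Re}(fT(g))>0\}$ and similar sets shows $fT(g)\in L^1$. To justify the Parseval formula here, approximate $g$ by truncations in $L^\infty$, for which Proposition~\ref{p-3.1} applies, and use monotone convergence.

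Conversely, (g)$\Rightarrow$(d): for each $A$, the functional $g\mapsto\int f\chi_AT(g)$ on $\logl$ is defined everywhere. The family is pointwise bounded by $\int|fT(g)|$, and each member is continuous by a closed graph argument, since $T\colon\logl\to L^1$ is bounded. Banach--Steinhaus then gives a uniform bound. Finally, Parseval identifies $T(f\chi_A)$ as $-$ this functional acting through $(\logl)^*=\lexp$, which gives (d). The step (e)$\Rightarrow$(f) is handled identically with $\theta$ in place of $\chi_A$, using that (e) implies (c) via the decomposition of $\chi_A$ into $\tfrac12(1+\theta_2)$. I expect this duality and uniform-boundedness step, in particular justifying Parseval for unbounded $f$, to be the main obstacle.
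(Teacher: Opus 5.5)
Your proof of (ii) is correct and self-contained: the layer-cake approximation, Kolmogorov's theorem and the Fatou property of $\lexp$ give the factor $4$. Also correct are the trivial implications, (e)$\Rightarrow$(c), and (g)$\Rightarrow$(d). Under (g), Proposition~\ref{p-3.1} applies to the pair $(f\chi_A,g)$ because $g\chi_B\in\logl$, and the closed graph bound for $g\mapsto fT(g)$ gives the uniform estimate.

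The genuine gap is (c)$\Rightarrow$(g). Every implication in your chain from (a), (c), (e) to (b), (d), (f), (g) depends on it, including (e)$\Rightarrow$(f). You invoke the Parseval identity $\int f\chi_A T(g)=-\int g\,T(f\chi_A)$. Via Proposition~\ref{p-3.1}, this requires $f\chi_A T(g\chi_B)\in L^1$ for all $B\in\mathcal{B}$, which is (g) itself. The role-swapped version would need $f\chi_A\in\logl$, which you do not have. Truncating $g$ does not help either. The special case of Proposition~\ref{p-3.1} needs the function multiplying $T(g)$, namely $f$, to be bounded, and $g_n\in L^\infty$ does not make $f\chi_A T(g_n\chi_B)$ integrable.

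If you truncate $f$ instead, you get $\int_{A\cap\{|f|\le n\}} fT(g)=-\int g\,T(f\chi_{A\cap\{|f|\le n\}})$. But letting $n\to\infty$ by monotone convergence requires $\sup_n\|T(f\chi_{A\cap\{|f|\le n\}})\|_{\lexp}<\infty$, i.e.\ (d), which is exactly what you are trying to derive. So the passage from the qualitative statement ``$T(f\chi_A)\in\lexp$ for each $A$'' to a uniform bound is never proved.

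One way to supply it uses only the tools you already employ in (ii).
\begin{itemize}
\item Work on the complete metric space of Borel sets modulo null sets, with $d(A,B)=\mu(A\triangle B)$.
\item The sets $\mathcal{F}_N:=\{A:\|T(f\chi_A)\|_{\lexp}\le N\}$ are closed. If $\mu(A_k\triangle A)\to0$ then $f\chi_{A_k}\to f\chi_A$ in $L^1$, so Kolmogorov's theorem gives a.e.\ convergence along a subsequence, and the Fatou property of $\lexp$ gives the bound.
\item By (c), these sets cover everything, so Baire's theorem gives a ball $\{A:\mu(A\triangle A_0)<\delta\}\subseteq\mathcal{F}_N$.
\item If $\mu(B)<\delta$, the identity $\chi_B=\chi_{A_0\cup B}-\chi_{A_0\setminus B}$, with both sets in that ball, gives $\|T(f\chi_B)\|_{\lexp}\le 2N$.
\item A finite partition of $(-1,1)$ into sets of measure less than $\delta$ then yields (d).
\end{itemize}
After that, your truncation argument (truncating $f$) proves (d)$\Rightarrow$(g); this is how the paper obtains (b)$\Rightarrow$(g). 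The paper takes (a)$\iff\cdots\iff$(f) from the proof of Proposition~4.1 in \cite{COR-ampa}. It closes the cycle with (g)$\Rightarrow$(c) via Proposition~\ref{p-3.1}, just as in your (g)$\Rightarrow$(d).
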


\begin{proof}
The proof of the equivalences (a)$\iff$(b)$\iff$(c)$\iff$(d)$\iff$(e)$\iff$(f) in part (i) 
follows by adapting the proof of Proposition 4.1 in \cite{COR-ampa}
to the case when $X=\lexp$,  after observing that the non-triviality of the Boyd indices
of $X$ is not used in these proofs, and by using Proposition \ref{p-3.1} above
in place of \cite[Proposition 3.1]{COR-ampa}.

Applying the special case of Proposition \ref{p-3.1} (i.e., when $f\in \linf$ and
$g\in\logl$) it can be argued as  in the proof of  (d)$\Rightarrow$(g) in
Proposition 4.1 of \cite{COR-ampa} to conclude that  (b)$\Rightarrow$(g).

Finally, suppose that (g) is satisfied. Fix any $A\in\mcB$. Then $(f\chi_A)T(g)\in L^1$
for every $g\in\logl$ (by assumption). Apply Proposition \ref{p-3.1} to
$f\chi_A$ in place of $f$ to obtain that $gT(f\chi_A)\in L^1$ for all $g\in\logl$. Accordingly,
$T(f\chi_A)\in (\logl)'=\lexp$, which establishes (c).

The inequalities in part (ii) can be proved  exactly as  in the proof of 
part (ii) of Proposition 4.1 in \cite{COR-ampa}.
\end{proof}


\begin{proposition}\label{p-6.2}
The space $[T, \lexp] $ is a linear lattice for the a.e.\ pointwise order and,
when equipped with the norm
\begin{equation}\label{eq-6.2}
\|f\|_{[T, \lexp]}:= \sup\Big\{\|T(h)\|_{\lexp}: |h|\le|f|
\Big\},\quad f \in [T, \lexp] ,
\end{equation}
it is a B.f.s.\ over $(-1,1)$.

Moreover,  $[T, \lexp] $ is the largest B.f.s., within $L^0$,   
to which   $\tinf\colon\linf \to \lexp$ admits an $\lexp$-valued, 
continuous linear extension.
\end{proposition}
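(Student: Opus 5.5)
The plan has three parts: the algebraic and lattice structure of $[T,\lexp]$, the Banach function space axioms for \eqref{eq-6.2}, and the maximality statement.

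\emph{Lattice structure.} $[T,\lexp]$ is a vector space and an order ideal of $L^0$ contained in $L^1$. If $f_1,f_2\in[T,\lexp]$ and $|h|\le|f_1+f_2|$, write $h=h_1+h_2$ with $h_j:=h\,|f_j|/(|f_1|+|f_2|)$ (set to $0$ where the denominator vanishes). Then $|h_j|\le|f_j|$, so $T(h)=T(h_1)+T(h_2)\in\lexp$. Scalar multiples are immediate. If $|g|\le|f|$ with $f\in[T,\lexp]$, then every $|h|\le|g|$ also satisfies $|h|\le|f|$, so $g\in[T,\lexp]$; this is the ideal property. By Proposition~\ref{p-3.2} (or \ref{p-3.4}) we have $\linf\subseteq[T,\lexp]$, hence $\simb\subseteq[T,\lexp]$.

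\emph{Norm axioms.} By Proposition~\ref{p-6.1}(i)(b) the quantity \eqref{eq-6.2} is finite. Subadditivity follows from the same splitting $h=h_1+h_2$, and homogeneity is clear. The lattice property holds because $|g|\le|f|$ makes the supremum for $g$ run over a subset of that for $f$. For definiteness: if $\|f\|_{[T,\lexp]}=0$, then $T(h)=0$ for all $|h|\le|f|$. In particular $T(f\chi_A)=0$ for every $A\in\mcB$, so $\int_A f\,T(g)=-\int gT(f\chi_A)=0$ for all $g\in\logl$ by Proposition~\ref{p-6.1}(g) and Proposition~\ref{p-3.1}. Hence $fT(g)=0$ a.e. for $g\in\linf$. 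Taking $g=\chi_{(-1,1)}$, the function $T(g)=\frac1\pi\log\frac{1-x}{1+x}$ vanishes only at $x=0$, so $f=0$ a.e.

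\emph{Continuous inclusion into $L^1$.} I would also check that $[T,\lexp]\subseteq L^1$ continuously, which is needed for the completeness argument. For $|h|\le|f|$, $h\in L^1$ and $T(h)\in\lexp$, so Proposition~\ref{p-3.1} gives $\bigl|\int h\,T(g)\bigr|=\bigl|\int g\,T(h)\bigr|\le\|g\|_{\logl}\|T(h)\|_{\lexp}$. Choosing $h=\theta f\,\chi_{\{x>0\}}$ and $g=\chi_{(-1,1)}$, with $T(g)\ge c>0$ away from $0$ suitably restricted, shows that $\int_{B}|f|\,|T(\chi_{(-1,1)})|\le C\|f\|_{[T,\lexp]}$. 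To avoid the zero of $T(\chi_{(-1,1)})$ at $x=0$, I would combine this with $g=\chi_{(0,1)}$, whose transform has a logarithmic singularity at $0$, to cover a neighbourhood of $0$. This yields $\|f\|_{L^1}\le C\|f\|_{[T,\lexp]}$.

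\emph{Completeness (the main obstacle).} I would use the Riesz--Fischer criterion. Take $f_n\ge0$ in $[T,\lexp]$ with $\sum\|f_n\|_{[T,\lexp]}<\infty$. Then $F:=\sum f_n$ converges in $L^1$. For $|h|\le F$, decompose $h=\sum h_n$ with $|h_n|\le f_n$ using the same ratio trick. The partial sums converge in $L^1$, hence $T$ of them converge in $L^0$ by Kolmogorov's theorem. They are also Cauchy in $\lexp$ with $\sum\|T(h_n)\|_{\lexp}<\infty$, and the limits agree a.e. along a subsequence. Therefore $T(h)=\sum T(h_n)\in\lexp$ with $\|T(h)\|_{\lexp}\le\sum\|f_n\|_{[T,\lexp]}$. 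This shows $F\in[T,\lexp]$ with the norm bound, and applying the same argument to tails gives norm convergence.

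\emph{Maximality.} Suppose $X$ is a B.f.s. and $S\colon X\to\lexp$ is a continuous linear extension of $\tinf$. For $f\in X$ and $|h|\le|f|$, we have $h\in X$ and can approximate it by truncations $h_n:=h\chi_{\{|h|\le n\}}\in\linf$. It is not automatic that $h_n\to h$ in $X$ if $X$ lacks absolutely continuous norm, so I would avoid this route: use instead that $X\subseteq L^1$ continuously and that $S$ agrees with $T$. The latter follows from a closed-graph/Kolmogorov subsequence argument as in Lemma~\ref{l-4.1}, applied on the closure of $\linf$ in $X$. Otherwise, I would interpret ``extension'' as $S=T$ pointwise on $X$, which is the convention of \cite{COR-ampa}. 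With that, $T(h)=S(h)\in\lexp$, so $f\in[T,\lexp]$. Finally, $\tinf$ extends to $[T,\lexp]$ itself by $T$, continuously, since $\|T(f)\|_{\lexp}\le\|f\|_{[T,\lexp]}$.
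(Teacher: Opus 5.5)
Your arguments for the lattice structure, the norm axioms, the continuous embedding $[T,\lexp]\subseteq L^1$ and completeness are correct. Completeness goes via Riesz--Fischer, using the splitting $h=\sum h_n$, Kolmogorov's theorem and completeness of $\lexp$. This is more self-contained than the paper, which defers these points to \cite{COR-ampa}. Your definiteness argument, via Parseval and the zero set of $T(\chi_{(-1,1)})$, also differs from the paper's, which picks $A$ with $f\chi_A\in\linf$ and uses injectivity of $\tinf$.

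The gap is in the maximality step. Under the natural reading of the statement, an extension is any continuous linear $S\colon X\to\lexp$ with $S=T$ on $\linf$. Your closed-graph/Kolmogorov argument only identifies $S$ with $T$ on the closure of $\linf$ in $X$. That closure need not be all of $X$: the closure of $\linf$ in $\lexp$, for instance, is $(\lexp)_b\neq\lexp$. So for the remaining $f\in X$ you have no control over $T(h)$ for $|h|\le|f|$. Falling back on the convention that the extension is $T$ itself makes the step immediate from the ideal property. However, it leaves the statement for arbitrary continuous linear extensions unproved.

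The missing idea is that the truncation route you discarded works without any convergence in $X$. Only boundedness in $X$ is needed, and the limit is taken in $\lexp$ using its Fatou property.

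Given $f\in X$ and $|h|\le|f|$, put $h_n:=h\chi_{\{|h|\le n\}}\in\linf$.
\begin{itemize}
\item Because the norm of $X$ is a lattice norm, $\|T(h_n)\|_{\lexp}=\|S(h_n)\|_{\lexp}\le\|S\|\,\|f\|_X$ for every $n$.
\item Because $h\in X\subseteq L^1$, we have $h_n\to h$ in $L^1$. Kolmogorov's theorem then gives $T(h_{n_k})\to T(h)$ a.e.\ along a subsequence.
\item Fatou's lemma for the r.i.\ space $\lexp$ \cite[Lemma I.1.5]{BS} now yields $T(h)\in\lexp$ with $\|T(h)\|_{\lexp}\le\|S\|\,\|f\|_X$.
\end{itemize}
Hence $f\in[T,\lexp]$, even with $\|f\|_{[T,\lexp]}\le\|S\|\,\|f\|_X$, regardless of how $S$ behaves off the closure of $\linf$.

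The same device, with $h_n:=h\min\{1,f_n/|h|\}$, also gives the Fatou property of $[T,\lexp]$. The paper records this property, but it is not part of its definition of a B.f.s., so omitting it is fine.
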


\begin{proof}
Proposition \ref{p-6.1} shows that $\|\cdot\|_{[T, \lexp]}$ is finite
on $[T,\lexp]$. That $[T,\lexp]$ is a vector space and  $\|\cdot\|_{[T, \lexp]}$ is a lattice
seminorm follows the argument given on p.1850 of \cite{COR-ampa}. To show that
$\|\cdot\|_{[T, \lexp]}$ is actually a norm, let $f\in[T,\lexp]$ satisfy
$\|f\|_{[T,\lexp]}=0$. Suppose that $f\not=0$. Choose 
$A\in\mathcal{B}$ with $\mu(A)>0$ such that $f\chi_A\in\linf$ and 
$f(x)\not=0$ for all $x\in A$. Since $|f\chi_A|\le |f|$, it follows from \eqref{eq-6.2} that
$\|T(f\chi_A)\|_{\lexp}=0$ and so $T(f\chi_A)=0$. The injectivity of
$\tinf\colon\linf\to\lexp$ implies that $f\chi_A=0$ a.e.
This contradiction establishes the claim.
Completeness of $[T,\lexp]$  follows the arguments given in the proof of
Lemma 4.4 in \cite{COR-ampa}. The proof that $[T,\lexp]$ satisfies the Fatou property
can be deduced from  the arguments used to establish Proposition 4.5 in \cite{COR-ampa}.

Establishing the optimality of $[T, \lexp]$ follows the proof of Theorem 4.6 of 
\cite{COR-mathnach} as explicitly done in the proof of Proposition 2.7 in \cite{COR-qm}.

We point out that  the proofs from \cite{COR-ampa} indicated 
above correspond to the case of $[T,X]$ when
$X$ is a r.i.\ space with non-trivial Boyd indices. However, they
remain valid in our current setting since the non-triviality of the Boyd indices
is not actually used in those proofs from \cite{COR-ampa}.
\end{proof}


The following remarkable inequality, namely Theorem 3.11 of \cite{COR-mathnach}, 
will be needed in the sequel.

\begin{theorem}\label{t-6.3}
Let $A\in\mathcal{B}$ satisfy $\mu(A)>0$. Then
$$
\big\|T(\chi_{A})\big\|_{\lexp}> \frac{1}{\pi e^2}.
$$
\end{theorem}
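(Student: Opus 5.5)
The plan is to prove a much stronger asymptotic statement, namely that
$$
\limsup_{t\to0^+}\frac{\int_0^t h^*(s)\,ds}{t\log(2e/t)}\ \ge\ \frac1\pi,\qquad h:=T(\chi_A),
$$
for every $A\in\mathcal B$ with $\mu(A)>0$. Since the norm $\|\cdot\|_{\lexp}$ is a supremum over $0<t<2$ of exactly this quotient, this yields $\|T(\chi_A)\|_{\lexp}\ge 1/\pi>1/(\pi e^2)$. (The constant $1/(\pi e^2)$ in the statement presumably comes from a non-asymptotic estimate; a limiting argument is enough here.) The key tool is the Stein--Weiss distribution formula for the Hilbert transform of a characteristic function on $\R$:
$$
\big|\{x\in\R: |H\chi_A(x)|>y\}\big|=\frac{2|A|}{\sinh(\pi y)},\qquad y>0,
$$
where $H f(x)=\frac1\pi\,\mathrm{p.v.}\int_\R \frac{f(s)}{x-s}\,ds$. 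For $A\subseteq(-1,1)$, the restriction of $H\chi_A$ to $(-1,1)$ equals $-T(\chi_A)$, so $|T(\chi_A)|=|H\chi_A|$ on $(-1,1)$.

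First I subtract the contribution of $\R\setminus(-1,1)$. For $x>1$ we have
$$
0< H\chi_A(x)=\frac1\pi\int_A\frac{ds}{x-s}\le \frac1\pi\int_{1-|A|}^1\frac{ds}{x-s}=\frac1\pi\log\frac{x-1+|A|}{x-1},
$$
since the integral is largest when $A$ is pushed against the endpoint $1$ (a bathtub/rearrangement argument, because $1/(x-s)$ increases in $s$). Hence the set of $x>1$ with $|H\chi_A(x)|>y$ has measure at most $|A|/(e^{\pi y}-1)$. The symmetric bound holds for $x<-1$. Therefore
$$
d(y):=\mu\{t\in(-1,1): |T(\chi_A)(t)|>y\}\ \ge\ \frac{4|A|}{e^{\pi y}-e^{-\pi y}}-\frac{2|A|}{e^{\pi y}-1}\ \ge\ c\,|A|\,e^{-\pi y}
$$
for all $y\ge y_0$, with an absolute constant $c>0$ (for instance any $c<2$, after taking $y_0$ large).

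Next I invert this distribution estimate. It gives $h^*(s)\ge \frac1\pi\log\big(c|A|/s\big)$ for all sufficiently small $s>0$. Integrating, $\int_0^t h^*(s)\,ds\ge \frac{t}{\pi}\big(\log(c|A|/t)+1\big)$. Dividing by $t\log(2e/t)$ and letting $t\to0^+$, the ratio tends to $1/\pi$, because the constants $c|A|$ and $2e$ only contribute additive terms that are negligible against $\log(1/t)$. This proves the claim and hence the theorem.

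The main obstacle is the first step. One has to use the exact Stein--Weiss identity, with its correct normalisation and the branch structure of its level sets, and then make sure the exterior subtraction leaves a positive multiple of $|A|e^{-\pi y}$. The comparison $4/(e^{u}-e^{-u})$ versus $2/(e^{u}-1)$ does leave asymptotically $2e^{-u}$, so I expect it to work. The point needing the most care is the extremal bound for $x>1$, which I will justify via the Hardy--Littlewood rearrangement inequality applied to the monotone kernel $s\mapsto 1/(x-s)$ on $(-1,1)$.
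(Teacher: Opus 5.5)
Your proof is correct. The paper itself gives no argument for this statement: it is quoted as Theorem 3.11 of \cite{COR-mathnach} and used only in Proposition~\ref{p-6.4}. There, all that matters is a positive lower bound independent of $A$. So rather than following or diverging from an argument in the paper, your proposal supplies a self-contained proof, with the Stein--Weiss distribution identity as its only external ingredient.

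Your steps check out: the normalisation of Stein--Weiss, $T(\chi_A)=-H\chi_A$ on $(-1,1)$, the bathtub bound outside $[-1,1]$, and the inversion and limit. In fact the exterior subtraction is exact rather than merely asymptotic. With $u=e^{\pi y}$,
$$
\frac{4u|A|}{u^2-1}-\frac{2|A|}{u-1}=\frac{2|A|}{u+1},
$$
so $d(y)\ge 2|A|/(e^{\pi y}+1)\ge |A|e^{-\pi y}$ for every $y>0$. Hence $h^*(s)\ge\frac1\pi\log(|A|/s)$ for all $0<s<|A|$, with no threshold $y_0$ needed. The first inequality is an equality for $A=(-1,1)$, which reproduces the rearrangement $\log((4-t)/t)$ computed in the proof of Proposition~\ref{p-3.5}.

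What your route buys is a stronger constant, $\|T(\chi_A)\|_{\lexp}\ge 1/\pi$. It also gives a transparent reason for the uniformity in $A$: only the behaviour of the norm quotient as $t\to0^+$ is used, and there $|A|$ enters only as an additive constant against $\log(1/t)$. The same computation works for the equivalent norm $\sup_t f^*(t)/\log(2e/t)$.

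Since Stein--Weiss also gives the matching upper bound $d(y)\le 2|A|/\sinh(\pi y)$, the quotient actually converges to exactly $1/\pi$ as $t\to0^+$. So $1/\pi$ is the most one can extract from that end of the norm. The constant $1/(\pi e^2)$ quoted from \cite{COR-mathnach} is weaker, but either constant suffices for Proposition~\ref{p-6.4}.
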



\begin{proposition}\label{p-6.4}
The inclusion $[T,\lexp]\subseteq\linf$ is valid.
\end{proposition}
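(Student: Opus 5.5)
We argue by contradiction, using Theorem \ref{t-6.3} together with the lattice norm of $[T,\lexp]$ from \eqref{eq-6.2}. Let $f\in[T,\lexp]$ and suppose $f\notin\linf$. Set $M:=\|f\|_{[T,\lexp]}$, which is finite by Proposition \ref{p-6.1}. For each $n\in\N$ the set $A_n:=\{x\in(-1,1):|f(x)|>n\}$ then has positive measure. On $A_n$ the function $h_n:=n\chi_{A_n}$ satisfies $|h_n|\le|f|$, so the definition \eqref{eq-6.2} gives
$$
n\,\|T(\chi_{A_n})\|_{\lexp}=\|T(h_n)\|_{\lexp}\le M .
$$
Theorem \ref{t-6.3} gives $\|T(\chi_{A_n})\|_{\lexp}>1/(\pi e^2)$, since $\mu(A_n)>0$. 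Combining the two bounds yields $n<\pi e^2 M$ for every $n\in\N$, which is absurd. Hence $\mu(A_n)=0$ for some $n$, that is, $|f|\le n$ a.e., and so $f\in\linf$.

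The argument has two points to check.

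First, $h_n$ is an admissible function in \eqref{eq-6.2}. This is immediate, because $h_n$ is bounded and dominated by $|f|$. Moreover, $T(h_n)\in\lexp$ already follows from Proposition \ref{p-3.2}, so no optimal-domain machinery is needed beyond finiteness of the norm.

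Second, the key input is the uniform lower bound of Theorem \ref{t-6.3}, which holds regardless of how small $\mu(A)$ is. This is exactly what prevents the sets $A_n$ from shrinking in a way that keeps $n\|T(\chi_{A_n})\|_{\lexp}$ bounded. Once that theorem is assumed, there is no real obstacle. The only remaining subtlety is that $f$ may be complex-valued, but $A_n$ is defined through $|f|$, so this causes no difficulty.

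Together with the inclusion $\linf\subseteq[T,\lexp]$ noted after \eqref{eq-6.1}, this gives $[T,\lexp]=\linf$ as sets. The inequality also shows $\|f\|_{\linf}\le \pi e^2\|f\|_{[T,\lexp]}$, which is useful for the subsequent isomorphism statement.
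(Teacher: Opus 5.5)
Your proof is correct. Its core matches the paper's: a uniform upper bound $n\|T(\chi_{A_n})\|_{\lexp}\le C$ is played against the lower bound of Theorem~\ref{t-6.3}.

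The difference is where the upper bound comes from. You read it off directly from the finiteness of the norm \eqref{eq-6.2}, i.e.\ from (a)$\Rightarrow$(b) in Proposition~\ref{p-6.1}. The paper instead works with the disjoint level sets $A_n=\{n\le|f|<n+1\}$. It uses the Parseval formula of Proposition~\ref{p-3.1} and condition (g) to show that $\{nT(\chi_{A_n}):n\in\N\}$ is weak-$*$ bounded in $\lexp=(\logl)^*$, and then applies the uniform boundedness principle. Since (a)$\Rightarrow$(g) is itself obtained in Proposition~\ref{p-6.1} by passing through (b), your route is the more economical one.

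Your route is also quantitative, which the paper's is not. With integer levels $n$ you only get $\|f\|_{\linf}\le\lceil\pi e^2\|f\|_{[T,\lexp]}\rceil$. Running the same argument with arbitrary real levels $s>0$ gives exactly $\|f\|_{\linf}\le\pi e^2\|f\|_{[T,\lexp]}$. Together with $\|f\|_{[T,\lexp]}\le\|T\|\,\|f\|_{\linf}$, this yields the bicontinuity in Theorem~\ref{t-6.5} directly, without the Open Mapping Theorem.
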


\begin{proof}
Let $f\in[T,\lexp]$. For each $n\in\N$, set $A_n:=\{x\in(-1,1): n\le|f(x)|<n+1\}$.
Note that $n\chi_{A_n}\le|f|$ a.e. Let $g\in\logl$. By applying 
Proposition \ref{p-3.1} to $n\chi_{A_n}\in\linf$ and $g\in\logl$
we can deduce
that both $gT(n\chi_{A_n})$ and $n\chi_{A_n}T(g)$ are integrable and satisfy
$$
\int_{-1}^1gT(n\chi_{A_n})=-\int_{-1}^1n\chi_{A_n}T(g),\quad n\in\N.
$$
Thus,  using the equivalence (a)$\iff$(g) in Proposition \ref{p-6.1}   
and the inequality $n\chi_{A_n}\le|f|$ yields
$$
\left|\int_{-1}^1gT(n\chi_{A_n})\right|\le \int_{-1}^1|fT(g)|<\infty,\quad n\in\N.
$$
Since the previous inequality holds for arbitrary $g\in\logl$,  it follows that the
set $\{n T(\chi_{A_n}):n\in\N\}$ is weak-$\ast$ bounded in $\lexp$ and hence, 
by the uniform boundedness principle, it is norm bounded in
$\lexp=(\logl)^*$. That is, for some constant $C>0$, we have
$$
\big\|T(\chi_{A_n})\big\|_{\lexp}\le \frac{C}{n},\quad n\in\N,
$$
which implies that $\|T(\chi_{A_n})\|_{\lexp}\to0$ for $n\to\infty$. 
But, Theorem \ref{t-6.3} implies that
$$
\big\|T(\chi_{A})\big\|_{\lexp}> \frac{1}{\pi e^2},
$$
for every $A\in\mathcal{B}$ with $\mu(A)>0$.
Consequently, there exists $n_0\in\N$ such that $\mu(A_n)=0$ for all $n\ge n_0$,
that is, $|f(x)|\le n_0$ for a.e.\ $x\in(-1,1)$. So, $f\in \linf$.
\end{proof}


We  now answer the question regarding the optimal extension of 
$\tinf\colon\linf \to \lexp$.

\begin{theorem}\label{t-6.5}
The identity $[T, \lexp] = \linf$ holds as an order 
and bicontinuous isomorphism between B.f.s.'  Hence, 
$\tinf\colon\linf \to \lexp$ does not admit a continuous, linear,  $\lexp$-valued 
extension to any strictly  larger B.f.s.\ within $ L^0$.
\end{theorem}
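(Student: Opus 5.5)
The plan is to combine the two inclusions already available and then upgrade the set-theoretic equality to an isomorphism of Banach function spaces. Proposition \ref{p-3.2}, together with the definition \eqref{eq-6.1} and the fact that $\linf$ is an order ideal, gives $\linf\subseteq[T,\lexp]$: if $f\in\linf$ and $|h|\le|f|$, then $h\in\linf$ and hence $T(h)\in\lexp$. Proposition \ref{p-6.4} gives the reverse inclusion $[T,\lexp]\subseteq\linf$. So the two spaces coincide as sets. Both carry the a.e.\ pointwise order, so the identity map is an order isomorphism.

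Next comes bicontinuity. For the continuity of $\linf\to[T,\lexp]$, I will use the boundedness of $\tinf$. If $f\in\linf$ and $|h|\le|f|$, then $\|h\|_{\linf}\le\|f\|_{\linf}$, and so
$$
\|T(h)\|_{\lexp}\le\|\tinf\|\,\|h\|_{\linf}\le\|\tinf\|\,\|f\|_{\linf}.
$$
Taking the supremum in \eqref{eq-6.2} gives $\|f\|_{[T,\lexp]}\le\|\tinf\|\,\|f\|_{\linf}$. The reverse estimate then follows without explicit constants. Both spaces are Banach spaces (Proposition \ref{p-6.2} for $[T,\lexp]$), the identity is a continuous bijection between them, and so the open mapping theorem shows that its inverse is bounded as well. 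An alternative is to note that the identity $[T,\lexp]\to\linf$ is a positive operator between Banach lattices and hence automatically continuous.

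Finally, the non-extendability statement follows from the optimality part of Proposition \ref{p-6.2}. Suppose $Z$ is a B.f.s.\ within $L^0$ with $\linf\subseteq Z$, and suppose $\tinf$ admits a continuous linear $\lexp$-valued extension to $Z$. Then $Z\subseteq[T,\lexp]=\linf$, so $Z=\linf$. Hence no strictly larger B.f.s.\ exists.

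No step should be a genuine obstacle, since the substantial work sits in Proposition \ref{p-6.4}, which relies on Theorem \ref{t-6.3}, and in Proposition \ref{p-6.2}. The only point needing care is that the open mapping argument requires completeness of $[T,\lexp]$ under the norm \eqref{eq-6.2}, and this is supplied by Proposition \ref{p-6.2}.
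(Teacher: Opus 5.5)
Your proposal is correct and follows essentially the same route as the paper: the two inclusions give equality of sets, the estimate $\|f\|_{[T,\lexp]}\le\|\tinf\|\,\|f\|_{\linf}$ together with the Open Mapping Theorem gives bicontinuity, and non-extendability comes from the optimality part of Proposition \ref{p-6.2}. Your alternative for the reverse estimate, that the identity $[T,\lexp]\to\linf$ is a positive operator between Banach lattices and hence automatically continuous, is also valid.
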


\begin{proof}
Propositions \ref{p-6.2} and  \ref{p-6.4} show that $\linf$ and
$[T,  \lexp]$ are equal as vector spaces.

To prove that $\linf$ is continuously included in $[T,  \lexp] $,
fix $f\in\linf$. For each $h\in L^0$ satisfying $|h|\le|f|$ observe that
$h\in\linf$ with $\|h\|_{\linf}\le\|f\|_{\linf}$. Proposition \ref{p-3.2} yields that 
$\|T(h)\|_{\lexp}\le \|T\|\cdot\|h\|_{\linf}\le \|T\|\cdot\|f\|_{\infty}$ and so we can
conclude from \eqref{eq-6.2} that 
$$
\|f\|_{[T, \lexp]}\le \|T\|\cdot\|f\|_{\linf},\quad f\in\linf,
$$
where $\|T\|$ denotes the operator norm $\|T\|_{\linf\to  \lexp}$.
That is, the natural inclusion $\linf\subseteq[T,  \lexp]$ is continuous.

Since both $\linf$ and $[T, \lexp]$ are B.f.s.' (cf.\ Proposition \ref{p-6.2}) it follows
from the Open Mapping Theorem 
that the natural inclusion $\linf\subseteq[T,  \lexp]$ is an isomorphism.

Proposition \ref{p-6.2} implies that $\tinf\colon\linf \to\lexp$ 
does not admit a continuous, linear,  $\lexp$-valued
extension to any strictly  larger B.f.s.\  within $ L^0$.
\end{proof}


\subsection*{Acknowledgements}
The first author acknowledges the support  of 
PID2024-155593NB-C21 (FEDER(EU) / Ministerio de Ciencia e Innovaci\'on-Agencia 
Estatal de Investigaci\'on),
IMUS-Mar\'{\i}a de Maeztu grant CEX2024-001517-M 
and FQM-262 (Junta de Andaluc\'{\i}­a).




\begin{thebibliography}{99}

\bibitem{APS}
R. Alaifari, L. B. Pierce, S. Steinerberger
\textit{Lower bounds for the truncated Hilbert transform},
Rev. Mat. Iberoam. \textbf{32 }(2016),  23--56.

\bibitem{BR}
C. Bennett,   K. Rudnick, \textit{On Lorentz--Zygmund spaces},
Dissertationes Math.  \textbf{175} (1980), 1--67.

\bibitem{BS} 
C. Bennett, R. Sharpley, 
\textit{Interpolation of Operators}, 
Academic Press,  Boston, 1988.

\bibitem{bertola-etal}
M. Bertola, A. Katsevich, A. Tovbis,
\textit{Singular value decomposition of a finite Hilbert transform defined 
on several intervals and the interior problem of tomography: 
the Riemann-Hilbert problem approach},
Comm. Pure Appl. Math.   \textbf{69} (2016), 407--477. 

\bibitem{cheng-rott}
H. K. Cheng, N. Rott, 
\textit{Generalizations of the inversion formula of thin airfoil theory},
J. Rational Mech. Anal. \textbf{3} (1954), 357--382. 

\bibitem{COR-ampa}
G. P. Curbera, S. Okada, W. J. Ricker, 
\textit{Inversion and extension of the finite Hilbert transform on $(-1,1)$},
Ann. Mat. Pura  Appl. (4) \textbf{198} (2019), 1835--1860.

\bibitem{COR-qm} 
G. P. Curbera, S. Okada, W. J. Ricker, 
\textit{Extension and integral representation of the finite 
Hilbert transform in rearrangement invariant spaces},
Quaest. Math. \textbf{43} (2020), 783--812.

\bibitem{COR-mh}
G. P. Curbera, S. Okada, W. J. Ricker, 
\textit{Non-extendability of the finite Hilbert transform},
Monatsh.  Math. (4) \textbf{195} (2021), 649--657.

\bibitem{COR-am}
G. P. Curbera, S. Okada, W. J. Ricker,
\textit{Fine spectra of the finite Hilbert transform in function spaces},
Adv.  Math. \textbf{380} (2021), 107597.

\bibitem{COR-mathnach}
G. P. Curbera, S. Okada, W. J. Ricker,
\textit{Measure theoretic aspects of the finite Hilbert transform},
Math. Nachr. \textbf{297} (2024), 3927--3942.

\bibitem{COR-asnsp}
G. P. Curbera, S. Okada, W. J. Ricker,
\textit{The finite   Hilbert transform acting in the Zygmund space  
$\logl$}, Ann. Sc. Norm. Super. Pisa Cl. Sci. (5) \textbf{25} (2024), 1527--1557.

\bibitem{duduchava} 
R. Duduchava,
\textit{Integral Equations in Convolution with Discontinuous Presymbols. Singular Integral 
Equations with Fixed Singularities, and their Applications to some Problems of Mechanics},
Teubner Verlagsgesellschaft, Leipzig, 1979.
 
\bibitem{gakhov} 
F. D. Gakhov,   \textit{Boundary Value Problems}, 
Dover Publications, Inc., New York, NY, 1990.

\bibitem{GG}
I. M. Gel'fand, M. I. Graev, 
\textit{Crofton function and inversion formulas
in real integral geometry}, Funct. Anal. Appl. \textbf{25}  (1991), 1--5.

\bibitem{gohberg-krupnik-1} 
I. Gohberg, N. Krupnik, 
\textit{One-Dimensional Linear Singular Integral Operators Vol. I. Introduction}, 
Operator Theory Advances and Applications \textbf{53}, Birkh\"auser,  Berlin, 1992.

\bibitem{gohberg-krupnik-2} 
I. Gohberg, N. Krupnik, 
\textit{One-Dimensional Linear Singular Integral Operators Vol. II. General Theory
and Applications}, 
Operator Theory Advances and Applications \textbf{54}, Birkh\"auser,  Berlin, 1992.

\bibitem{katsevich-tovbis}
A. Katsevich, A. Tovbis, \textit{Finite Hilbert transform with 
incomplete data: null-space and singular values},  
Inverse Problems \textbf{28} (2012), 105006, 28 pp.

\bibitem{king}  
F. W. King, \textit{Hilbert Transforms
Vol. I}, Cambridge University Press, Cambridge New York, 2009.

\bibitem{KPS}
S. G. Krein,   Ju. I. Petunin,  E. M. Semenov, 
\textit{Interpolation of Linear Operators}, 
Amer. Math. Soc., Providence, R. I., 1982.

\bibitem{LT}
J. Lindenstrauss,  L. Tzafriri, \textit{Classical Banach Spaces
Vol. II}, Springer-Ver\-lag, Berlin, 1979.

\bibitem{Lu}
W. A. J. Luxemburg, \textit{Banach Function Spaces}, 
PhD Thesis, Delft Institute of Technology, The Netherlands, 1955.


\bibitem{ME}
W. McLean, D. Elliott, 
\textit{On the p-norm of the truncated Hilbert transform},
Bull. Austral. Math. Soc.  \textbf{38} (1988),  413--420.

\bibitem{mikhlin-prossdorf}  
S. Mikhlin, S. Pr\"ossdorf, 
\textit{Singular Integral Operators}, Springer-Verlag, Berlin, 1986.

\bibitem{muskhelishvili}  
N. I. Muskhelishvili, 
\textit{Singular Integral Equations}, 
Wolters-Noordhoff Publishing, Groningen, 1967

\bibitem{OE}
S. Okada, D. Elliott, \textit{The finite Hilbert transform in $\mathcal{L}^2$},
Math. Nachr. \textbf{153} (1991),  43--56.

\bibitem{okada-ricker-sanchez}
S. Okada, W. J. Ricker, E. A. S\'anchez-P\'erez, 
\textit{Optimal Domain and Integral Extension of Operators: Acting in Function Spaces}, 
Operator Theory Advances and Applications \textbf{180}, Birkh\"auser,  Berlin, 2008.

\bibitem{P}
S. K. Pichorides, 
\textit{On the best values of the constants in the theorems 
of M. Riesz, Zygmund and Kolmogorov},
Studia Math. \textbf{44} (1972), 165--179.

\bibitem{reissner}
E. Reissner, \textit{Boundary value problems in 
aerodynamics of lifting surfaces in non-uniform motion}, 
Bull. Amer. Math. Soc. \textbf{55} (1949), 825--850. 

\bibitem{R}
W. Rudin, 
\textit{Functional Analysis},
McGraw Hill, New York-St. Louis-San Francisco, 1973.

\bibitem{sidky-etal}
E. Y. Sidky, Xiaochuan Pan, \textit{Recovering a compactly 
supported function from knowledge of its Hilbert transform on a finite interval},
IEEE Signal Processing Letters \textbf{12} (2005), 97--100.

\bibitem{sohngen}  
H. S\"ohngen, \textit{Zur Theorie der endlichen Hilbert-Transformation}, 
Math. Z. \textbf{60} (1954),  31--51.

\bibitem{tricomi-1}
F. G. Tricomi,   \textit{On the finite Hilbert transform}, 
Quart. J. Math. \textbf{2} (1951),  199--211.

\bibitem{tricomi}
F. G. Tricomi, \textit{Integral Equations}, 
Interscience, New York, 1957.

\bibitem{Z}
A. Zygmund, 
\textit{Trigonometric Series},
Cambridge University Press, Cambridge, 1959.

\end{thebibliography}
\end{document}